\theoremstyle{plain}
\newtheorem{thm}{Theorem}[section]
\newtheorem{lem}[thm]{Lemma}
\newtheorem{claim}{Claim}
\theoremstyle{definition}
\newtheorem{defn}{Definition}
\theoremstyle{remark}
\newtheorem{remark}{Remark}
\newcommand*{\A}{\mathtt{A}}
\newcommand*{\B}{\mathtt{B}}
\newcommand*{\C}{\mathtt{C}}
\newcommand*{\ball}[2]{\mathrm{B}(#1,#2)} % usage: \ball{center, radius)
\newcommand*{\shent}{\mathrm{H}} % Uhannon entropy
\newcommand*{\f}{f} % f-invariant
\let\mb\mathnormal
\newcommand*{\st}{\,:\,}
\DeclarePairedDelimiter{\abs}{\lvert}{\rvert}
    \def\EE{{\mathbb{E}}}         \def\NN{{\mathbb{N}}}   \def\Q{{\mathbb{Q}}} \def\R{{\mathbb{R}}}        \def\ZZ{{\mathbb{Z}}}
 \def\cB{{\mathcal{B}}}   \def\cE{{\mathcal{E}}} \def\cF{{\mathcal{F}}}         \def\cO{{\mathcal{O}}}      \def\cU{{\mathcal{U}}} \def\cV{{\mathcal{V}}} \def\cW{{\mathcal{W}}}   
    \def\tcE{{\tilde{\mathcal{E}}}} \def\tcF{{\tilde{\mathcal{F}}}}
\def\hal{{\hat{\alpha}}}
\def\hbe{{\hat{\beta}}}
                         \def\tZ{{\tilde{Z}}}
    \def\tcE{{\tilde{\cE}}} \def\tcF{{\tilde{\cF}}}
       \def\tGamma{{\widetilde{\Gamma}}}  \def\tTheta{{\widetilde{\Theta}}}    
\newcommand{\G}{\Gamma}
\newcommand{\Om}{\Omega}
\newcommand{\eps}{\epsilon}
\renewcommand\a{\alpha}
\renewcommand\b{\beta}
\renewcommand\d{\delta}
\newcommand\g{\gamma}
\newcommand\s{\sigma}
\newcommand\Hom{\operatorname{Hom}}
\newcommand\Past{\operatorname{Past}}
\newcommand\Prob{\operatorname{Prob}}
\newcommand\sym{\operatorname{sym}}
\newcommand\Unif{\operatorname{Unif}}
\def\cc{{\curvearrowright}}
  \newcommand{\rL}{\textrm{L}}
\begin{document}
\title{Entropy for actions of free groups under bounded orbit-equivalence}
\author{Lewis Bowen\footnote{supported in part by NSF grant DMS-1900386} and Yuqing (Frank) Lin\\ University of Texas at Austin}
%\author{University of Hawaii}
%\author[Lewis Bowen]{Lewis Bowen$\dagger$}
%\address{Department of Mathematics\\
%University of Hawai'i--Manoa\\
%} %one \address command per author
%\email{lpbowen@math.hawaii.edu}
%%\thanks{$\dagger$ Uupported in part by NUF grants DMU-??.}
\maketitle

\begin{abstract}
The $f$-invariant is a notion of entropy for probability-measure-preserving actions of free groups. We show it is invariant under bounded orbit-equivalence.
\end{abstract}

\noindent
{\bf Keywords}: orbit equivalence, $f$-invariant\\
{\bf MSC}: 37A35\\

\noindent
\tableofcontents

\section{Introduction}
The goal of this note is to prove that the $f$-invariant, which is a generalization of Kolmogorov-Sinai entropy to actions of free groups, is invariant under $\rL^1$-orbit-equivalence. We begin by quickly going over the history of entropy theory and orbit-equivalence to set notation and motivate the main result.

\subsection{Notation}
Throughout, we will denote standard probability spaces by $(X,\mu)$, $(Y,\nu)$, etc and their Borel sigma-algebras by $\cB_X, \cB_Y$ etc. If $G$ is a countable group then a {\bf pmp action} of $G$ is a tuple $(X,\mu, T,G)$ where $(X,\mu)$ is a standard probability space and for every $g\in G$, $T^g:X \to X$ is a measure-preserving transformation satisfying $T^{gh}=T^g\circ T^h$ (pmp is short for probability-measure-preserving). In the special case that $G=\ZZ$, a pmp action is also called a {\bf pmp transformation}.

Let $(Y,\nu,U,H)$ be another pmp action where $H$ is a countable group, $(Y,\nu)$ a standard probability space and $U = (U^h)_{h \in H}$ a pmp action of $H$ on $Y$. Let $\Phi:(X,\mu) \to (Y,\nu)$ be a measure-space isomorphism.
\begin{itemize}
\item $\Phi$ is an {\bf orbit-equivalence} (OE) if for a.e. $x \in X$, 
$$\Phi(T^G x)=U^H \Phi(x)$$
where, for example, $T^G x= \{T^g x:~g \in G\}$ is the $T^G$-orbit of $x$.
\item $\Phi$ is a {\bf measure-conjugacy} if $G=H$ and $\Phi(T^gx)=U^g\Phi(x)$ for a.e. $x\in X$ and every $g\in G$. 
\end{itemize}
Measure-conjugacy implies orbit-equivalence and the converse is false in general.

\subsection{Classical entropy theory}

%Bernoulli shifts, Kolmogorov-Sinai, Ornstein, amenable groups.

Classical entropy theory was introduced by Kolmogorov in order to classify Bernoulli shifts up to measure conjugacy \cite{kolmogorov-1958, kolmogorov-1959}. Given a countable group $G$ and a Borel probability space $(K,\kappa)$, the {\bf Bernoulli shift over $G$ with base $(K,\kappa)$} is the product probability space $(K^G,\kappa^G)$ with the $G$-action $G \cc K^G$ given by
$$(g \cdot x)_f = x(g^{-1}f),\quad g,f \in G, x\in K^G.$$
For example, if $x$ is a random element of $K^G$ with law $\kappa^G$ then $(x_g)_{g\in G}$ is a $G$-indexed i.i.d. process in which each variable $x_g$ has law $\kappa$.

The {\bf Shannon entropy} of the base space is defined by
$$H(K,\kappa) = -\sum_{k \in K} \kappa(k)\log \kappa(k)$$
if $\kappa$ is concentrated on a countable set. Otherwise, $H(K,\kappa):=+\infty$. 

Now suppose $(X,\mu)$ is a standard probability space. An {\bf observable} on $X$ is a measurable map $\phi:X \to \A$ where  $\A$ is a finite or countable set. If $\psi: X \to \B$ is another observable, then the {\bf join} of $\phi$ and $\psi$ is the observable
$$\phi\vee \psi:X \to \A\times \B, \quad \phi\vee\psi(x)=(\phi(x),\psi(x)).$$
The {\bf Shannon entropy} of the observable $\phi$ with respect to the measure $\mu$ is 
$$H_{\mu}(\phi) = H(\A, \phi_*\mu) = -\sum_{a \in \A} \mu(\phi^{-1}(a))\log \mu(\phi^{-1}(a)).$$
If $T:X \to X$ is measure-preserving then the {\bf entropy rate of $(X,\mu,T,\phi)$} is
$$h_{\mu}(T,\phi) := \lim_{n\to\infty} \frac{1}{n} H_{\mu}( \phi \vee \phi \circ T \vee \cdots \vee \phi \circ T^{n-1}).$$
The {\bf entropy rate} of $T$ is defined to be the supremum of entropy rates $h_{\mu}(T,\phi)$ over all finite Shannon entropy observables $\phi$:
$$h_{\mu}(T) =\sup\{h_{\mu}(T,\phi):~ H_{\mu}(\phi)<\infty\}.$$
This is a measure-conjugacy invariant. Moreover, Kolmogorov proved the entropy rate of a Bernoulli shift over $\ZZ$ is the same as the Shannon entropy of its base space \cite{kolmogorov-1958, kolmogorov-1959}. This proves one direction of Ornstein's Isomorphism Theorem \cite{ornstein-1970a}: Bernoulli shifts over $\ZZ$ are measurably conjugate if and only if they have the same base space entropy. All of these results were extended to actions of countably infinite amenable groups \cite{OW80, OW87}. For example, if $G$ is a countable amenable group then two Bernoulli shifts over $G$ are measurably conjugate if and only if they have the same base space entropy.

\subsection{Orbit equivalence (the amenable case)}
%A {\bf pmp transformation} is a triple $(X,\mu,T)$ where $(X,\mu)$ is a standard measure space and $T:X \to X$ is measure-preserving. Two pmp transformations $(X,\mu,T)$ and $(Y,\nu,U)$ are said to be {\bf orbit-equivalent} (OE) if there is a measure space isomorphism $\Phi:X \to Y$ (called an {\bf orbit-equivalence}) such that for a.e. $x \in X$, 
%$$\Phi(T^\ZZ x)=U^\ZZ \Phi(x)$$
%where, for example, $T^\ZZ x= \{T^n x:~n \in \ZZ\}$. In other words, $\Phi$ is required to map orbits to orbits but is not required to be equivariant.

In \cite{MR0131516, MR0158048}, Dye proved that all ergodic aperiodic pmp transformations are orbit-equivalent (where aperiodic means that a.e. orbit is infinite).  In \cite{OW80} it was announced that all essentially free ergodic  pmp actions of countably infinite amenable groups are OE. A complete proof appears in \cite{MR662736}. In particular, entropy is not an OE-invariant.

\subsection{Quantitative orbit equivalence}
Suppose that $\Phi:X \to Y$ is an orbit-equivalence as above. Also suppose both actions are essentially free. Then there are cocycles $\a: G\times X \to H$, $\beta:H \times Y \to G$ defined by
$$U^{\a(g,x)}\Phi(x) = \Phi(T^gx), \quad T^{\beta(h,y)}\Phi^{-1}(y) = \Phi^{-1}(U^h y).$$
These cocycles satisfy the identities
\begin{eqnarray}\label{E:cocycle1}
\a(gh,x) = \a(g,T^hx)\a(h,x), &\quad& \beta(gh,y) = \beta(g,U^hy)\beta(h,y) \label{E:cocycle1} \\
\a(\beta(g,y),\Phi^{-1}(y))=g, &\quad& \beta(\a(g,x),\Phi(x))=g. \label{E:cocycle2}
\end{eqnarray}
Suppose $G$ and $H$ are both finitely generated groups. After choosing finite generating sets, we may let $|\cdot|_G:G \to \R$, $|\cdot|_H:H\to \R$ denote word-length functions. The map $\Phi$ is said to be an {\bf $\rL^p$- orbit equivalence} ($\rL^p$-OE) if for every $g \in G$ and $h \in H$ the functions
$$x \mapsto |\a(g,x)|_H, \quad y \mapsto  |\beta(h,y)|_G$$
are in $\rL^p(X,\mu)$ and $\rL^p(Y,\nu)$ respectively. $\rL^1$-OE is also called {\bf integrable OE} and $\rL^\infty$-OE is also called {\bf bounded OE}. These notions do not depend on the choice of finite generating sets. In \cite{MR3579704}, Austin proved that entropy is an $\rL^1$-orbit equivalence invariant for actions of finitely generated amenable groups.

%$$\int |\a(g,x)|_H ~d\mu_X(x) < \infty, \quad \int |\beta(h,y)|_G ~d\mu_Y(y) < \infty.$$
%The map $\Phi$ is said to be an {\bf $\rL^\infty$- or bounded- orbit equivalence} if $|\a(g,\cdot)| \le$

The map $\Phi$ is said to a {\bf Shannon orbit equivalence} if for every $g \in G$ and $h \in H$, the observables $\a(g,\cdot):X \to H$ and $\beta(h,\cdot):Y \to G$ have finite Shannon entropies.  In \cite{MR4297192} Kerr and Li proved that if each of $G$ and $H$ contain a w-normal amenable subgroup that is neither locally finite nor virtually cyclic then entropy for their actions is invariant under Shannon orbit equivalence, where entropy means maximum sofic entropy. The groups are not required to be amenable or finitely generated but they are required to be sofic. The statement is false for locally finite groups by a counterexample due to Vershik \cite{MR1304093}.  In fact, Vershik's counterexample is with a bounded orbit-equivalence (but using non-finitely generated groups). %Carderi, Le Maitre, Joseph and Tessera have recently announced a counterexample for $\ZZ$ actions.

\subsection{Entropy for non-amenable groups}
For a long time, there was no entropy theory for actions of non-amenable groups. Ornstein and Weiss exhibited an example of a factor map between Bernoulli shifts over a non-abelian free group in which the base space entropy of the source is smaller than the base space entropy of the target \cite{OW87}. By contrast, entropy for amenable groups cannot increase under a factor map. In spite of this it is possible to define entropy for actions of non-amenable groups. Today there are several versions of entropy: sofic, Rokhlin, naive, etc (see \cite{MR4138907} for a survey). We will focus on the $f$-invariant which is a flavor of entropy specifically tailored to actions of free groups.

\subsection{The $f$-invariant}\label{S:intro-f}
Let $G = \langle S \rangle$ denote the rank $r$ free group with generating set $S = \{s_1, \ldots, s_r\}$. Let $(X,\mu,T,G)$ be a pmp action of $G$ and $\phi:X \to \A$ be an observable. For any subset $H \subset G$, let $\phi^{T,H}:X \to \A^H$ be the join
$$\phi^{T,H} = \bigvee_{h \in H} \phi \circ T^{h^{-1}}.$$
Let $\ball{e}{\rho}\subset G$ denote the ball of radius $\rho$ centered at the identity in $G$ with respect to the word metric (where $e \in G$ denotes the identity). Write $\phi^{T,\rho} = \phi^{T,\ball{e}{\rho}}$. 

Define
\begin{align*}
	F_\mu (T, \phi) &= (1-2r) \shent_\mu (\phi) + \sum_{i=1}^r \shent_\mu (\phi^{T,\{e,s_i\}}) \\
	\f_\mu (T, \phi) &= \inf_\rho F_\mu (T, \phi^{T,\rho}) = \lim_{\rho \to \infty} F_\mu (T, \phi^{T,\rho}).
\end{align*}

An observable $\phi$ is {\bf $T$-generating} if the smallest $T^G$-invariant Borel sigma-algebra in which $\phi$ is measurable is the full Borel sigma-algebra $\cB_X$, up to sets of measure zero. If there exists an observable $\phi$ which has finite Shannon entropy $H_{\mu}(\phi)<\infty$ and is $T$-generating then the action $(X,\mu,T,G)$ is said to be {\bf finitely generated}. This terminology is justified by Seward's generalization of Krieger's Theorem \cite{seward-kreiger-1} which implies that if $(X,\mu,T,G)$ is finitely generated then there exists a $T$-generating observable $\phi:X \to \A$ such that $\A$ is finite. 

The main theorem of \cite{MR2630067} is that if $(X,\mu,T,G)$ is finitely generated then there is a number $\f_{\mu}(T) \in [-\infty, \infty)$ called the {\bf $f$-invariant} such that every $T$-generating finite Shannon entropy observable $\phi$ satisfies $\f_{\mu}(T,\phi)=\f_{\mu}(T)$.  If the action is not finitely generated then the $f$-invariant is not defined.

In \cite{MR2630067}, it is shown that the $f$-invariant of a Bernoulli shift action equals the Shannon entropy of its base space. So the $f$-invariant distinguishes Bernoulli shifts. However, in \cite{MR2763777} it is shown that all Bernoulli shifts of a free group are OE. In particular, the $f$-invariant is not an OE-invariant.

The main theorem of this paper is:
\begin{thm}\label{thm:main}
The $f$-invariant is invariant under bounded-orbit-equivalence. To be precise, suppose $G$ is a free group and  $(X,\mu,T,G)$, $(Y,\nu,U,G)$ are finitely generated essentially free pmp actions of $G$. If these actions are bounded orbit-equivalent then $f_{\mu}(T) = f_{\nu}(U)$. 
\end{thm}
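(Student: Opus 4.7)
\emph{Proof plan.} The strategy is to transport a finite $T$-generating observable from $X$ to $Y$ via $\Phi$, augmenting with bounded cocycle data, and compare the $F$-functionals of the resulting observables. By Seward's theorem, fix a finite $T$-generating observable $\phi \colon X \to \A$. Bounded OE provides a constant $R$ with $|\a(s_i^{\pm 1}, \cdot)|_G, |\b(s_i^{\pm 1}, \cdot)|_G \leq R$ a.s.\ for each generator, so the cocycle identity (\ref{E:cocycle1}) yields $|\a(g, \cdot)|_G, |\b(g, \cdot)|_G \leq R|g|_G$ for every $g$. Define the finite observable
$$\Psi(x) := \bigl(\phi(x),\, \a(s_1, x), \ldots, \a(s_r, x),\, \a(s_1^{-1}, x), \ldots, \a(s_r^{-1}, x)\bigr) \in \A \times \ball{e}{R}^{2r},$$
and its pushforward $\tPsi := \Psi \circ \Phi^{-1}$ to $Y$. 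Since $\Psi$ refines $\phi$, $\Psi$ is $T$-generating; combining the cocycle data inside $\Psi$ with the duality (\ref{E:cocycle2}) shows $\tPsi$ is $U$-generating. Hence $\f_\mu(T) = \lim_{\rho} F_\mu(T, \Psi^{T, \rho})$ and $\f_\nu(U) = \lim_{\rho} F_\nu(U, \tPsi^{U, \rho})$.

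The geometric heart of the argument is a \emph{ball-to-ball correspondence}. For $y = \Phi(x)$ and $k \in \ball{e}{\rho}$ we have $\tPsi(U^k y) = \Psi(T^{\b(k, y)}x)$ with $\b(k, y) \in \ball{e}{R\rho}$. Iterating (\ref{E:cocycle1}) along reduced words, $\Psi^{T, R\rho}(x)$ determines $\a(g, x)$ for every $g \in \ball{e}{R\rho}$, and hence via (\ref{E:cocycle2}) also determines $\b(k, y)$ for every $k \in \ball{e}{\rho}$. Therefore $\tPsi^{U, \rho}(\Phi(x))$ is a deterministic function of $\Psi^{T, R\rho}(x)$, and symmetrically $\Psi^{T, \rho}(x)$ is a deterministic function of $\tPsi^{U, R\rho}(\Phi(x))$. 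These factor maps provide the combinatorial bridge between the two actions at matched radii scaled by $R$, and reduce the theorem to showing
$$F_\nu(U, \tPsi^{U, \rho}) - F_\mu(T, \Psi^{T, \rho}) \longrightarrow 0 \quad \text{as } \rho \to \infty.$$

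\textbf{Main obstacle.} Controlling this discrepancy is the core difficulty. Since the factor maps relate radii differing by the factor $R$, comparing the individual Shannon entropies appearing in $F_\mu$ and $F_\nu$ at the same radius yields errors that can grow exponentially with ball size $(2r-1)^\rho$. Genuine cancellation must be extracted from the alternating-sign structure of $F$. The guiding principle is that $F$ is, modulo boundary, a per-edge quantity on the Cayley tree of $G$: at each interior tree vertex the $(1-2r)\shent(\cdot)$ term cancels the bulk of the $\sum_i \shent(\cdot^{T,\{e, s_i\}})$ contributions, leaving only a single-edge conditional mutual information of bounded magnitude. Telescoping such contributions through the ball-to-ball correspondence --- and ruling out accumulation of boundary effects in the $R$-annulus $\ball{e}{R\rho} \setminus \ball{e}{\rho}$, possibly after refining $\Psi$ by the $n$-step cocycle and letting $n \to \infty$ --- is the main technical work that the body of the paper must carry out.
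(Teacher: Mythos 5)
Your proposal stops short of a proof: the claim that $F_\nu(U,\tPsi^{U,\rho}) - F_\mu(T,\Psi^{T,\rho}) \to 0$ is the entire theorem, and your write-up itself flags this as ``the main technical work that the body of the paper must carry out.'' The obstacle you correctly diagnose --- that the ball-to-ball correspondence only relates radii $\rho$ on one side to $R\rho$ on the other, so naive entropy comparison produces an error with the same $(2r-1)^\rho$ growth as the signal --- is not a technicality to be cleaned up, it is the reason a direct manipulation of the $F$-functional does not obviously close. The inclusion--exclusion cancellation in $F_\mu = (1-2r)\shent_\mu(\phi) + \sum_i \shent_\mu(\phi^{T,\{e,s_i\}})$ is exact only for nearest-neighbour overlaps along the tree; after pushing through a bounded-but-not-identity cocycle, the overlap pattern changes, and I see no reason the telescoping you gesture at converges. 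The factor-map observations you make (that $\Psi^{T,R\rho}$ determines $\tPsi^{U,\rho}\circ\Phi$ and conversely) are correct and would be the starting point for a subadditivity argument, but subadditivity of $F$ is precisely what fails for free groups.

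The paper avoids this issue entirely by working at the level of sofic microstates rather than Shannon entropies. The architecture is: (i) the $f$-invariant equals the exponential growth rate of approximately equidistributed colourings of random $2r$-regular Schreier graphs (\cite{bowen-entropy-2010b}); (ii) Theorem~\ref{thm:SFT} sharpens this to say that when $\mu$ lives on a subshift of finite type $Z$, one may restrict the count to microstates whose pullback names all lie in $Z$; (iii) bounded OE means the orbit-change map $x\mapsto\hal_x$ lands in $\sym_\rho(G)$, and Theorem~\ref{T:SFT1} shows $\cE(\sym_\rho(G))$ is an SFT; (iv) given a microstate $(\s,x,y)$ whose pullback names lie in this SFT, Lemmas~\ref{L:tau} and~\ref{L:tau2} produce a new homomorphism $\tau:G\to\sym(n)$ --- a ``rearranged'' Schreier graph --- on the same vertex set and same labelling, whose empirical distribution approximates $\tcF_*\nu$ when the original approximates $\tcE_*\nu$. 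Injectivity of this rearrangement map $\Upsilon_n$ then gives the inequality on counts, hence on $f$, with no entropy cancellation needed. This periodic-point route is where all the machinery of \S\ref{S:symbolic}--\S\ref{S:SFTBOE} goes, and none of it appears in your sketch. Your observable $\Psi$ and its pushforward $\tPsi$ do capture the right data (they are essentially $\G\vee(\cE\circ\hal)$ restricted to generators), so the setup is compatible with the paper's; what is missing is the entire mechanism for converting bounded cocycle data into a counting statement.
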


\begin{remark}
Belinskaya proved that if two ergodic aperiodic pmp transformations $T, U$ are $\rL^1$-OE then they are either measurably conjugate or flip-conjugate (which means $T$ is measurably conjugate to $U^{-1}$) \cite{MR0245756}. So if $G=\ZZ$ then Theorem \ref{thm:main} is trivial. This motivates the question: if $G,T,U$ are as in Theorem \ref{thm:main}, then must $T$ be measurably conjugate to $U \circ \a$ for some automorphism $\a:G \to G$? A recent work-in-progress due to Matthieu Joseph shows the answer is `no' by an explicit counterexample. 
\end{remark}

\subsection{Related literature}

About the problem of classifying Bernoulli shifts up to measure-conjugacy: Seward proved that if two probability spaces have the same Shannon entropy then the corresponding Bernoulli shifts are measurably conjugate, for any countably infinite group \cite{seward-ornstein}. The converse holds for sofic groups \cite{bowen-jams-2010, MR2813530}.% and groups which satisfy Gottschalk's surjuncitivity conjecture \cite{MR3959054}. 

About the problem of classifying Bernoulli shifts up to OE: if a group $G$ is Bernoulli cocycle-superrigid, then it is immediate that if two Bernoulli shifts over $G$ are OE then they are measurably-conjugate. This notion is implicit in ground-breaking work of Popa where it is proven that $G$ is Bernoulli cocycle-superrigid if it contains an infinite normal subgroup $N$ such that either (i) the pair $(G,N)$ has relative property (T), or (ii) $N$ is generated by (element-wise) commuting subgroups $H$ and $K$, with $H$ nonamenable and $K$ infinite (\cite[Theorem 0.1]{Pop07} and \cite[Theorem 4.1]{Pop08}). Recent work show that a type of entropy called weak Pinsker entropy  is OE-invariant for all essentially free pmp actions of Bernoulli cocycle-superrigid groups  \cite{bowen-td2}. On the other hand, there are two classes of groups for which it is known that all Bernoulli shifts are OE. These are countably infinite amenable groups \cite{OW80, MR662736} and free products of amenable groups \cite{MR2763777}. It remain a very interesting open problem whether fundamental groups of closed surfaces of genus $\ge 2$ have this property.

Kammeyer and Rudolph found a unified approach to Dye's and Ornstein's Theorem which also gives explicit restrictions on orbit-equivalence which imply entropy-invariance \cite{kammeyer-rudolph}, for actions of amenable groups.

Kerr and Li find conditions under which topological sofic entropy is preserved under continuous orbit equivalence in \cite{kerr-li-oe1}. 

Rudolph and Weiss proved that if $T$ and $S$ are orbit-equivalent pmp ergodic essentially free actions of amenable groups, then their entropies {\em relative to their orbit-change sigma-algebras} are equal \cite{rudolph-weiss-2000}. This insight was developed into a technique for generalizing entropy-theory results for $\ZZ$-actions to actions of arbitrary amenable groups \cite{danilenko-2001, danilenko-2002}.

\subsection{Outline of the proof and paper}

The techniques we use are completely different from the works of Austin \cite{MR3579704} and Kerr-Li \cite{MR4297192}, \cite{kerr-li-oe1}. 

By \cite{bowen-entropy-2010b}, we know that the $f$-invariant is the exponential growth rate of the average number of approximate periodic points (or microstates) which are approximately equidistributed with respect to the given measure. This result is strengthened in \S \ref{S:symbolic} for the special case in which the invariant measure $\mu$ is supported on a subshift of finite type (SFT). In this case, we find that, to compute the $f$-invariant, it suffices to count actual periodic points in the SFT.  This is very special to the free group. For example, the analogous statement is false for actions of $\ZZ^2$, because of the existence of SFTs without periodic points. 

In \S \ref{S:prelim} we set notation for the rest of the paper. In particular, we assume $(X,\mu)=(Y,\nu)$ and $T, U$ are essentially free actions of $G$ on $(X,\mu)$ with the same orbits. There are cocycles $\a: G\times X \to G$, $\beta:G \times X \to G$ defined by
$$U^{\a(g,x)}x = T^gx, \quad T^{\beta(g,x)}x = U^g x.$$

The idea now is to show that, after replacing the system $(X,\mu,T,G)$ with a  measurably conjugate system, we may assume $X$ is a SFT. Moreover, we can design the SFT so that $\alpha(g,\cdot)$ and $\beta(g,\cdot)$ are continuous functions of $X$. Then any periodic point for the system $(X,\mu,T,G)$ can be re-arranged to obtain a periodic point for $(X,\mu,U,G)$ (and vice versa). Because the $f$-invariant is the exponential growth rate of approximately-equidistributed periodic points, this proves the main Theorem \ref{thm:main}. This is shown in \S \ref{S:orbit-change} and  \S \ref{S:SFTBOE}. The last section \S \ref{S:open} is devoted to open problems.

%Next we prove that if $T,U$ are $\rL^1$-orbit-equivalent then there is a sequence of actions $T_i$ ($i\in \NN$) such that each $T_i$ is bounded orbit-equivalent to $T$ and $T_i$ converges to $U$ in the uniform topology as $i \to\infty$. Finally, we prove the $f$-invariant is continuous with respect to the uniform topology on the space of actions. This uses the well-known result that entropy rate for $\ZZ$-actions is continuous in the $\bd$-topology.

{\bf Acknowledgements}. L.B. would like to thank David Kerr for helpful conversations.

\section{Symbolic dynamics}\label{S:symbolic}

Let $\A$ be a countable or finite alphabet, $\A^G$ be the set of functions $x:G \to \A$ and $G$ act on $\A^G$ by 
$$(gx)(f)=x(g^{-1}f)\quad \forall f,g \in G, x\in \A^G.$$
Symbolic dynamics is the study of $G$-invariant measures and subspaces of $\A^G$. 

Whenever we are working with symbolic dynamical systems we always use the left shift action described above and  as seen above we do not use a symbol such as $T$ for the action. We also write $f_\mu(\phi)$ instead of $f_\mu(T,\phi)$ for example and we denote the system by $(\A^G,\mu,G)$.

In the first subsection below, we recall a formula for the $f$-invariant of an invariant measure $\mu$ on $\A^G$ in terms of counting periodic points. In the second subsection, we assume the support of $\mu$ is contained in a subshift of finite type $Z \subset \A^G$ and prove a formula for the $f$-invariant in terms of counting periodic points {\em which lie in $Z$}. This second formula is crucial to our proof of Theorem \ref{thm:main}. It seems to be a very special fact about free groups. For example, the analogous statement fails for the group $\ZZ^d$ for any $d\ge 2$ because of the existence of subshifts of finite type which contain no periodic points.

% in terms of the number of periodic points in $X$ which are approximately equidistributed according to $\mu$. 

\subsection{The $f$-invariant via periodic points}

%The $f$-invariant admits an alternative formula as follows. Let $\A$ be a finite alphabet and $\mu \in \Prob(\A^G)$ be a $G$-invariant measure. We will give an alternative formula for the $f$-invariant $f_\mu(\A^G)$, which is the $f$-invariant of $\mu$ with respect to the shift-action of $G$ on $\A^G$.

In this section let $\A$ be a finite set.  First we approximate the action of $G$ on itself by an action of $G$ on a finite set. So let $\s:G \to \sym(n)$ be a homomorphism into the symmetric group on $[n]=\{1,\ldots, n\}$.

Next we consider observables $\mb{x}:[n] \to \A$ whose local statistics approximate $\mu$. To make this precise, define the {\bf pullback name of $\mb{x}$ at vertex $v\in [n]$} by
$$\mb{x}^\s_v \in \A^G, \quad \mb{x}^\s_v(g) = \mb{x}(\s(g)^{-1}v).$$
We observe that the map $v \mapsto \mb{x}^\s_v$ is equivariant in the sense that
$$\mb{x}^\s_{\s(g)v} = g \mb{x}^\s_v$$
for any $g\in G$. In particular, $\mb{x}^\s_v$ is a periodic point of $\A^G$ (that is, it has a finite $G$-orbit). 
%check: \Pi^\s_{\s(g)v}(\mb{x}))(h) = \mb{x}(\s(h)^{-1}\s(g)v) = \mb{x}(\s(h^{-1}g)v) =  \Pi^\s_{v}(\mb{x}))(g^{-1}h) = (g \Pi^\s_v(\mb{x}))(h).

The {\bf empirical distribution} of $\mb{x}$ is defined by
$$P^\sigma_{\mb{x}} = \frac{1}{n} \sum_{v\in [n]} \d_{\mb{x}^\s_v} \in \Prob(\A^G)$$
where $ \d_{\mb{x}^\s_v}$ is the Dirac probability measure concentrated on  ${\mb{x}^\s_v}$ and  $\Prob(\A^G)$ is the space of all Borel probability measures on $\A^G$. 

 Informally, we consider $\mb{x}$ to be a good approximation to $\mu$ if $P^\sigma_{\mb{x}}$ is close to $\mu$. To make this notion precise, recall that the weak* topology on $\Prob(\A^G)$ is the weakest topology with the following property: for every continuous function $f: \A^G \to \R$, the map $\mu \mapsto \int f~d\mu$ is a continuous function on $\Prob(\A^G)$ with respect to the weak* topology. Thus a sequence $(\mu_i)_i$ weak* converges to a measure $\mu_\infty$ if and only if: for every continuous $f: \A^G \to \R$, $\int f~d\mu_i \to \int f~d\mu_\infty$ as $i\to\infty$. By the Banach-Alaoglu Theorem, $\Prob(\A^G)$ is compact. 
 
 Now let $\cO \subset \Prob(\A^G)$ be a weak* open neighborhood of $\mu$ and define
\[ \Omega(\sigma, \cO) := \{ \mb{x} \in \A^n \st P^\sigma_{\mb{x}} \in \cO \}. \]
Then $\Omega(\sigma,\cO)$ is the set of all observables on $[n]$ whose empirical distributions are in $\cO$. 

For each $n \in \NN$, let  $u_n = \Unif(\Hom(G, \sym(n)))$ be the uniform probability measure on the set of homomorphisms from $G$ to $\sym(n)$. The main result of \cite{bowen-entropy-2010b} is the formula
\begin{eqnarray}\label{E:fsofic}
 f_\mu (\A^G) = \inf_{\cO \ni \mu} \limsup_{n \to \infty} \frac{1}{n} \log \EE_{\sigma \sim u_n} \abs{\Omega(\sigma, \cO)}.
 \end{eqnarray}
The goal of the next section is to make a small but crucial change to the formula above in the special case in which the support of $\mu$ is contained in a subshift of finite type $Z \subset \A^G$. 

%To make an analogy with statistical physics, we can think of $\mu$ as a macroscopic statistical distribution of the state of a system; then the $f$-invariant is the exponential growth rate of the number of ``microstates'' that are consistent with these statistics where a microstate is an element of $\Omega(\s,\cO)$.

\subsection{Subshifts of finite type}

\begin{defn}[Subshifts of finite type]\label{D:SFT}
As above, let $G$ be a countable group, $\A$ a finite set and $G \cc \A^G$ the left shift action: $(gx)(f)=x(g^{-1}f)$ for $g,f \in G, x\in \A^G$. A subshift $Z \subset \A^G$ is a closed $G$-invariant subspace. It has {\bf finite type} if there exists a finite collection $\cW$ of maps $w: D_w \to \A$ such that
\begin{enumerate}
\item $D_w \subset G$ is finite for all $w\in \cW$;
\item $Z$ is the set of all $x \in \A^G$ satisfying: for all $g \in G$ and $w\in \cW$, the restriction of $gx$ to $D_w$ is not equal to $w$.
\end{enumerate}
So  $\cW$ is a set of forbidden patterns and $Z$ is the subshift of finite type (SFT) {\bf determined by $\cW$}. 

If $G = \langle s_1,...,s_r\rangle$ and for each $w \in \cW$ there exists an $i \in \{1,\ldots,r\}$ such that the domain $D_w=\{e,s_i\}$, then 
we say $Z$ is a {\bf nearest neighbor} subshift of finite type.  In other words a nearest neighbor subshift forbids only certain edge patterns.
\end{defn}

\begin{defn}
Let $Z \subset \A^G$ be a subshift of finite type. With notation as in the previous section, let $\Om_Z(\s,\cO)$ be the set of all $\mb{x} \in \Om(\s,\cO)$ such that $P^\sigma_{\mb{x}}(Z)=1$. This occurs precisely when $\mb{x}^\s_v \in Z$ for all $v \in [n]$.   
\end{defn}

\begin{thm}\label{thm:SFT}
Suppose $\A$ is finite and $Z \subset \A^G$ is a subshift of finite type. Let $\mu\in \Prob(\A^G)$ be a $G$-invariant Borel probability measure concentrated on $Z$ (meaning $\mu(Z)=1$). Let
\[ f^Z_\mu(\A^G)  = \inf_{\cO \ni \mu} \limsup_{n \to \infty} \frac{1}{n} \log \EE_{\sigma \sim u_n} \abs{\Omega_Z(\sigma, \cO)} . \]
Then
	\[ f_\mu (\A^G) = f^Z_\mu(\A^G).\]
\end{thm}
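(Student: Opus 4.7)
The easy inequality $f^Z_\mu(\A^G) \le f_\mu(\A^G)$ follows immediately from \eqref{E:fsofic}, since $\Omega_Z(\sigma, \cO) \subseteq \Omega(\sigma, \cO)$ for every $\sigma$ and every $\cO$.

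For the reverse inequality, the plan is a \emph{repair argument}. Given any $\cO \ni \mu$, I would exhibit a smaller weak$^*$-neighborhood $\cO' \subseteq \cO$ and, for each $\sigma:G \to \sym(n)$, a map $\mb{x} \mapsto \mb{x}'$ sending $\Omega(\sigma, \cO')$ into $\Omega_Z(\sigma, \cO)$ that changes at most $\epsilon n$ coordinates. From such a map one obtains the counting bound
\[ \abs{\Omega(\sigma, \cO')} \le \binom{n}{\lfloor \epsilon n \rfloor} \abs{\A}^{\epsilon n} \cdot \abs{\Omega_Z(\sigma, \cO)}, \]
and since the prefactor is $e^{o(n)}$ as $\epsilon \to 0$, taking logarithms, dividing by $n$, taking $\limsup_n$ and $\EE_\sigma$, and finally sending $\epsilon \to 0$ and $\cO \to \{\mu\}$ in succession gives $f_\mu(\A^G) \le f^Z_\mu(\A^G)$ via \eqref{E:fsofic}.

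The reduction to the repair problem rests on the observation that $\mb{x}^\sigma_v \in Z$ for every $v \in [n]$ if and only if no $u \in [n]$ is a \emph{seed bad} vertex, in the sense that $\mb{x}(\sigma(d)^{-1}u) = w(d)$ for every $d \in D_w$ for some $w \in \cW$. Indeed, the equivariance $g\mb{x}^\sigma_v = \mb{x}^\sigma_{\sigma(g)v}$ says the shifts of $\mb{x}^\sigma_v$ are precisely the pullback names along its $\sigma(G)$-orbit, while a seed bad vertex is one whose pullback name exhibits a forbidden pattern at the identity. Since each cylinder $A_{e,w} := \{x \in \A^G : x|_{D_w}=w\}$ is clopen with $\mu(A_{e,w})=0$, the open set
\[ \cO_\delta := \{ P \in \Prob(\A^G) : P(A_{e,w}) < \delta \text{ for every } w \in \cW\} \]
is a weak$^*$ neighborhood of $\mu$, and every $\mb{x} \in \Omega(\sigma, \cO_\delta)$ has at most $\delta \abs{\cW} n$ seed bad vertices. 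So taking $\cO' \subseteq \cO_\delta$ with $\delta$ sufficiently small reduces everything to repairing such configurations.

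The main obstacle is thus a \emph{local repair lemma}: given $\mb{x}$ with at most $\delta n$ seed bad vertices relative to $\sigma$, produce $\mb{x}'$ with no seed bad vertices, differing from $\mb{x}$ on at most $\epsilon(\delta) n$ coordinates, with $\epsilon(\delta) \to 0$ as $\delta \to 0$. After a standard recoding, one may assume $Z$ is nearest-neighbor, so seed bad vertices correspond to bad edges in the $2r$-regular Schreier graph of $\sigma$ on $[n]$, and the total number of bad edges is at most $\delta \abs{\cW} n$. Here the freeness of $G$ is essential: the Cayley graph of $G$ is a tree, so the Schreier graph of a typical $\sigma$ has logarithmic girth, and the sparse bad-edge subgraph is a forest on all but a vanishingly small exceptional set. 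One can then process it inductively from the leaves, changing a single coordinate per step and drawing valid replacement labels from a fixed configuration of $Z$ (nonempty because $\mu(Z)=1$). Carefully bounding the cascades introduced by each local change, and patching the exceptional non-tree portion in bulk using a fixed point of $Z$, is the delicate technical content. This step is precisely what breaks down for $\ZZ^d$ with $d \ge 2$, where a nonempty SFT may contain no periodic points and no tree structure is available to tame cascades.
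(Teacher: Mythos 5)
Your reduction to the ``repair problem'' is clean and correct: the neighborhoods $\cO_\delta$ are indeed weak\textsuperscript{*} open (cylinders are clopen and $\mu$ assigns them measure zero), and $\mb{x} \in \Omega(\sigma,\cO_\delta)$ does bound the density of seed bad vertices. But the proof then rests entirely on an unproven local repair lemma, and I do not think that lemma is within reach by the argument you sketch. The core difficulty is that changing a single label $\mb{x}(v)$ to fix one bad edge can immediately create new bad edges on \emph{all} of the other $2r-1$ edges of the Schreier graph incident to $v$ --- the cascade propagates through the full $2r$-regular Schreier graph, not through the sparse bad-edge subgraph. Processing the bad-edge forest ``from its leaves'' does not control this: the Schreier graph itself has no leaves, and there is no reason the propagation terminates after $O(1)$ steps per bad edge. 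Concretely, take $G = \langle a,b\rangle$, $\A = \{0,1\}$, and $Z$ the nearest-neighbor SFT forcing $x(g) = x(ga)$ for all $g$ with no constraint along $b$. A uniformly random $\sigma(a)$ typically has cycles of length $\Theta(n)$; a configuration $\mb{x}$ that alternates between $0$-blocks and $1$-blocks of length $1/\delta$ around such a cycle has $\approx\delta n$ bad $a$-edges, yet any repair to an $a$-constant labeling must change a positive fraction of all $n$ labels. So $\epsilon(\delta)\not\to 0$ uniformly in $\sigma$ and $\mb{x}$, and the counting bound fails. (One might hope to discard the atypical $\sigma$, but the problematic $\sigma$ here are the typical ones.) The ``bulk patching with a fixed configuration $z \in Z$'' suffers the same defect: overwriting a region with $z$ creates new bad edges at its boundary.

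The paper avoids all of this by never attempting to repair a given $\mb{x}$. Instead it approximates $\mu$ in weak\textsuperscript{*} by Markov measures $\mu_n'$ on $\A^G$ supported on $Z$ with \emph{rational} transition probabilities, uses that for such $\mu_n'$ the $f$-invariant can be computed by counting \emph{exactly} equidistributed microstates (via $G_\mu$ and the results of \cite{bowen-entropy-2010a,bowen-entropy-2010b}), and observes that exact equidistribution for a nearest-neighbor SFT automatically forces $P^\sigma_\psi(Y)=1$, so no forbidden patterns occur and no repair is needed. The upper semicontinuity of $\mu \mapsto f^Y_\mu$ then transfers the conclusion back to $\mu$. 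If you want to salvage a repair-style argument, you would need a quantitative statement that sparse defects in Schreier microstates of free groups can be absorbed at sublinear cost --- a statement that, as the example above shows, is false for general nearest-neighbor SFTs on $F_r$ and would at minimum require strong hypotheses on $Z$ (some uniform mixing/extender property) that the theorem does not assume.
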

Because $\Omega_Z(\sigma, \cO) \subset \Omega(\sigma, \cO)$, it is immediate that $f^Z_\mu(\A^G) \le f_\mu(\A^G)$. So it suffices to prove the opposite inequality.
\begin{remark}
 Suppose $G$ is a countable group and $Z \subset \A^G$ is a subshift of finite type which has no periodic points (i.e., no points with finite $G$-orbit) but does admit an invariant probability measure. In this case,  no analog of Theorem \ref{thm:SFT} can hold. The first proof that $\ZZ^2$ admits such an SFT is due to Berger \cite{MR216954}. For context, a subshift of finite type that has no periodic points is called {\bf weakly aperiodic}. An SFT on which the group acts freely is called {\bf strongly aperiodic}. There is an interesting line of research whose goal is to determine which groups admit weakly or strongly aperiodic SFTs (e.g. \cite{MR3594268, MR3692905, MR3600067, MR4186473}). 
 \end{remark}

The proof of Theorem \ref{thm:SFT} will take up the rest of this section. We assume from now on that $G = \langle s_1,...,s_r\rangle$ is a free group of rank $r$ and $S= \{s_1,\ldots,s_r\}$.

\subsubsection{Proof sketch} %%%%%%%%%%%%%%%%%%%%%%%%%

In the special case in which $\mu$ is a Markov chain which assigns rational numbers to cylinder sets, the proof follows quickly from \cite{bowen-entropy-2010a, bowen-entropy-2010b}. We obtain the full theorem from by approximating an arbitrary invariant measure by Markov chains. These approximating Markov chains are typically not nearest-neighbor and so we will have to work with general observables $\phi:\A^G \to \C$ with respect to which a measure might be Markov.

We begin by introducing restricted versions of the $f$-invariant and the function $F$. Then we discuss Markov chains and establish a number of lemmas before finishing the proof.
\subsubsection{Restricted versions of $f$ and $F$}\label{S:restricted} %%%%%%%%%%%%%%%%%%%%%%%%%

\emph{We will make the following assumptions for the rest of this section}. As above, we let $\mu$ be a  $G$-invariant Borel probability measure on $\A^G$. Also let $\C$ be a finite set and let $\phi:\A^G \to \C$ be continuous. We do not require that $\phi$ is generating. This induces a map $\Phi: \A^G \to \C^G$ by
$$\Phi(x)(g)= \phi(g^{-1}x).$$
$\Phi$ is the unique $G$-equivariant map from $\A^G$ to $\C^G$ such that $\Phi(x)(e)=\phi(x)$. Let $Y \subset \C^G$ be a nearest neighbor SFT containing the support of $\Phi_*\mu$. 

%Let $\C$ be a finite set.  Let $\phi: \A^G \to \C$ be an observable on $Z$ (we allow $\C \neq \A$ and $\phi$ not necessarily generating).  Let $\Phi: Z \to \C^G$ be the shift-equivariant map satisfying $\Phi(x)_e = \phi(x)$  and let $Y = \Phi(Z)$. 

%\emph{For the rest of this section we will require $\phi$ to be such that $Y$ is a nearest neighbor subshift of finite type} (for example, this occurs if $\phi: \A^G \to \A^{F'}$ is the projection onto $\A^{F'}$ where $F'\subset G$ is a finite subset containing the set $F$ in Definition \ref{D:SFT}).

By (\ref{E:fsofic}),
 \[ f_\mu ( \phi) = \inf_{\cO \ni \Phi_*\mu} \limsup_{n \to \infty} \frac{1}{n} \log \EE_{\sigma \sim u_n} \abs{\Omega(\sigma, \cO)} . \]
We define $f^Y_\mu (\phi)$ by replacing $\Omega(\sigma,\cO)$ with $\Omega_{Y}(\sigma,\cO)$:
\[ f^Y_\mu ( \phi) = \inf_{\cO \ni \Phi_*\mu} \limsup_{n \to \infty} \frac{1}{n} \log \EE_{\sigma \sim u_n} \abs{\Omega_{Y}(\sigma, \cO)} . \]
For example, if  $\phi$ is generating, then to prove Theorem \ref{thm:SFT}, it suffices to show $f_\mu ( \phi)=f^Y_\mu ( \phi)$ with $Y = \Phi(Z)$, along with the  claim below. 

The claim below  verifies that the restricted formula for the $f$-invariant is still invariant for example under recoding maps.
\begin{claim}\label{C : restricted_invar}
Suppose $\A$, $G$, $\phi$, $\C$, $\Phi$ are as above.  Suppose $Z \subset \A^G$ is a subshift, $\mu(Z)=1$, $\Phi(Z) = Y$, and $\Phi$ is a homeomorphism when restricted to $Z$.  Let $\psi: \A^G \to \A$ be the canonical observable (i.e. $\psi(x) = x_e$).  Then $f_\mu^Z(\psi) = f_\mu^Y(\phi)$.
\end{claim}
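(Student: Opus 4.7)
The plan is to exhibit, for each homomorphism $\sigma : G \to \sym(n)$, an explicit bijection between the microstates defining $f_\mu^Z(\psi)$ and those defining $f_\mu^Y(\phi)$, and to check that this bijection intertwines empirical distributions via $\Phi_*$. Concretely, for $\mb{x} \in \A^n$ with every pullback name in $Z$, set
\[
\mb{y}(v) := \phi(\mb{x}^\sigma_v), \qquad v \in [n].
\]
Using the equivariance identity $\mb{x}^\sigma_{\sigma(g)v} = g\mb{x}^\sigma_v$ and the $G$-equivariance of $\Phi$, a short computation gives $\mb{y}^\sigma_v = \Phi(\mb{x}^\sigma_v) \in \Phi(Z) = Y$. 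Because $\Phi|_Z$ is a homeomorphism onto $Y$, the assignment reverses: starting from $\mb{y} \in \C^n$ with every pullback name in $Y$, define $\mb{x}(v) := \psi(\Phi^{-1}(\mb{y}^\sigma_v))$, and the analogous computation yields $\mb{x}^\sigma_v = \Phi^{-1}(\mb{y}^\sigma_v) \in Z$. Since the empirical distribution depends only on the multiset of pullback names, the bijection satisfies
\[
P^\sigma_{\mb{y}} = \frac{1}{n}\sum_{v\in[n]} \delta_{\Phi(\mb{x}^\sigma_v)} = \Phi_* P^\sigma_{\mb{x}}.
\]

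The second step is to convert between weak$^*$ neighborhoods of $\mu$ in $\Prob(\A^G)$ and neighborhoods of $\Phi_*\mu$ in $\Prob(\C^G)$. Since $\Phi|_Z : Z \to Y$ is a homeomorphism of compact metrizable spaces, the pushforward $\Phi_* : \Prob(Z) \to \Prob(Y)$ is a weak$^*$ homeomorphism. Given an open $\cO \ni \mu$ in $\Prob(\A^G)$, the set $\Phi_*(\cO \cap \Prob(Z))$ is relatively open in $\Prob(Y)$ and contains $\Phi_*\mu$, so it lifts to some open $\cO' \ni \Phi_*\mu$ in $\Prob(\C^G)$ with $\cO' \cap \Prob(Y) \subseteq \Phi_*(\cO \cap \Prob(Z))$. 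Under the bijection above, any $\mb{y} \in \Omega_Y(\sigma,\cO')$ pulls back to some $\mb{x} \in \Omega_Z(\sigma,\cO)$, so $\abs{\Omega_Y(\sigma,\cO')} \le \abs{\Omega_Z(\sigma,\cO)}$. Taking $\limsup_n \frac{1}{n}\log \EE_{\sigma \sim u_n}$ and then $\inf$ over $\cO$ gives $f_\mu^Y(\phi) \le f_\mu^Z(\psi)$. The reverse inequality is symmetric: start from a neighborhood $\cO'$ of $\Phi_*\mu$, pull back via $\Phi_*^{-1}$ to a relatively open neighborhood of $\mu$ in $\Prob(Z)$, and lift that to an open $\cO$ in $\Prob(\A^G)$.

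The only real obstacle is keeping the bookkeeping straight between open sets in the ambient spaces $\Prob(\A^G), \Prob(\C^G)$ and the corresponding relatively open subsets of $\Prob(Z), \Prob(Y)$; this is handled cleanly because $Z$ and $Y$ are closed shift-invariant subsets, so every relatively open set extends to some ambient open set and the pushforward under $\Phi|_Z$ respects both topologies. Once this is in place, the claim is essentially a change of coordinates: each side counts the same collection of microstates, relabeled through the homeomorphism $\Phi|_Z$.
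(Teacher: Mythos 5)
Your proposal is correct and follows essentially the same route as the paper's proof: for a fixed homomorphism $\sigma$ you define $\mb{y}(v)=\phi(\mb{x}^\sigma_v)=\Phi(\mb{x}^\sigma_v)(e)$, verify $\mb{y}^\sigma_v=\Phi(\mb{x}^\sigma_v)$, note that the empirical distribution transforms by $\Phi_*$, and use continuity/homeomorphism of $\Phi|_Z$ to translate weak$^*$ neighborhoods. The only stylistic difference is that you make the inverse direction explicit (writing down $\mb{x}(v)=\psi(\Phi^{-1}(\mb{y}^\sigma_v))$ and checking $\mb{x}^\sigma_v=\Phi^{-1}(\mb{y}^\sigma_v)$ via equivariance of $\Phi^{-1}|_Y$), whereas the paper argues injectivity of $\eta$ in one direction and says ``repeat with $\Phi^{-1}$''; your version fills in that small gap cleanly, but both arguments are the same in substance.
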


\begin{proof}
Fix $\s : G \to \sym(n)$ a homomorphism, and $x:[n]\to \A$ an observable.  For every $v \in [n]$ recall the pullback name $x^\s_v\in \A^G$.  Define $y:[n] \to \C$ by $y(v) = \Phi(x^\s_v)_e$.  We claim that $y^\s_v = \Phi(x^\s_v)$.  

First notice that since $\s$ is a homomorphism, for any $g\in G$, $gx_v^\s = x_{\s(g)v}^\s$.  This is because for $h\in G$, $(gx_v^\s)_h = (x_v^\s)_{g^{-1}h} = x(\s(h^{-1}g)v)$ while also $(x_{\s(g)v}^\s)_h = x(\s(h^{-1})(\s(g)v)) = x(\s(h^{-1}g)v)$.  Now $(y^\s_v)_h = y(\s(h^{-1})v) = \Phi(x^\s_{\s(h^{-1})v})_e = h\Phi(x^\s_{\s(h^{-1})v})_h = \Phi(hx^\s_{\s(h^{-1})v})_h = \Phi(x^\s_v)_h$.  Let $\eta: \A^n \to \C^n$ be the map described above (such that $\eta(x) = y$).  $\eta$ is injective on $\Om_Z(\s,\cO)$ for each $n$ and $\s$ because $\Phi$ is injective on $Z$.

Now since $\phi$ is assumed continuous, so is $\Phi$ (by the Curtis-Lyndon-Hedlund theorem).  This implies $\Phi_*: \Prob(\A^G) \to \Prob(\C^G)$ is also continuous.  Let $\nu = \Phi_*\mu$.  Then for any weak* neighborhood $\cU$ of $\nu$ there exists a weak* neighborhood $\cO$ of $\nu$ such that $\Phi_*(\cO) \subset \cU$.  

We will show that for any $\s: G \to \sym(n)$ homomorphism, $\#\Om_Z(\s,\cO) \le \#\Om_Y(\s,\cU)$.  Let $x \in \Om_Z(\s,\cO)$.  This means that $P^\s_x \in \cO$ and $P^\s_x(Z)=1$.  We claim that $P^\s_{\eta(x)} \in \cU$ and $P^\s_{\eta(x)}(Y) = 1$. By definition, $P^\s_{\eta(x)} = (1/n)\sum_{v\in [n]}\d_{\Phi(x^\s_v)} = \Phi_*P^\s_x$, so by continuity of $\Phi_*, P^\s_{\eta(x)} \in \cU$.  Furthermore each $\Phi(x^\s_v) \in Y$ by assumption, so $P^\s_{\eta(x)}(Y) = 1$.  

Now the above claim together with the injectivity of $\eta$ shows that $\#\Om_Z(\s,\cO) \le \#\Om_Y(\s,\cU)$, which in turn shows that $f^Z_\mu(\psi) \le f^Y_\mu(\phi)$.  The proof can be repeated for $\Phi^{-1}$ to show the opposite inequality.
\end{proof}

We need to connect the above definitions with the definition of the $f$-invariant from \S \ref{S:intro-f}. For this, let $\s:G \to \sym(n)$ be a homomorphism and suppose $\psi \in \C^n$ is an observable on $[n]$.  For $H \subset G$ finite, let $\psi^{\s,H} = \vee_{h\in H} \psi \circ \s(h^{-1})$ where $\psi_1 \vee \psi_2(v) = (\psi_1(v), \psi_2(v))$.  So $\psi^{\s,H}(v)$ can be thought of as giving an $H$-configuration around $v$.  Unlike the definitions (in sections 1.2 and 1.6) for the infinite space this definition depends on a choice of sofic approximation $\s \in \Hom(G,\sym(n))$.  Notice that both $\phi^H$ and $\psi^{\s,H}$ take values in $\C^H$.  So we can define $d^H_\s(\phi,\psi)$ to be the $\ell^1$-distance between $\phi_*^H(\mu)$ and $\psi_*^{\s,H}\Unif_n$:
 $$d^H_\s(\phi,\psi) = \sum_{a\in \C^H} \left|\mu\left(\{x\in \A^G:~\phi^H(x)=a\}\right) - \Unif_n\left(\left\{i \in [n]:~ \psi^{\s,H}(i)=a\right\}\right)\right|.$$
 Here, $\Unif_n$ is the uniform probability measure on $[n]$. Note that $H_1 \subset H_2$ implies $d^{H_1}_\s(\phi,\psi) \le d^{H_2}_\s(\phi,\psi)$. Let $d^*_\s(\phi,\psi) = \sum_{i=1}^r d^{\{e,s_i\}}_\s(\phi,\psi)$.

By Theorem 1.4 in \cite{bowen-entropy-2010b}, 
$$F_\mu( \phi) = \inf_{\eps>0} \limsup_{n\to\infty} \frac{1}{n} \log \EE_{\sigma \sim u_n}  |\{\psi \in \C^n: d^*_\s (\phi,\psi) \leq \eps\}|.$$  
Define 
$$F^Y_\mu( \phi):= \inf_{\eps>0} \limsup_{n\to\infty} \frac{1}{n} \log \EE_{\sigma \sim u_n} |\{\psi \in \C^n: d^*_\s(\phi,\psi) \leq \eps, P^\s_{\psi}(Y) = 1\}|.$$  
Also note that $f^Y$ can equivalently be expressed as  
$$f^Y_\mu(\phi)=\inf_{K\subset G}\inf_{\eps>0} \lim\sup_n \frac{1}{n} \log \EE_{\sigma \sim u_n}  |\{\psi \in \C^n: d^K_\s(\phi,\psi) \leq \eps, P^\sigma_\psi(Y) = 1\}|$$
where the infimum is over finite $K \subset G$. 
\subsubsection{Markov chains}

%Let $(X,\mu,T,G)$ be a pmp action and $\phi:X \to \C$ be a measurable map to a finite set $\C$. We do not require that $\phi$ is generating. The tuple $(X,\mu,T,G, \phi)$ is {\bf Markov} if 

We now introduce Markov processes in the same way as defined in \cite{bowen-entropy-2010a}.

Let $(X,\mu,T,G)$ be a pmp action.  Let $\phi: X \to \C$ be an observable. For $H \subset G$ finite recall that $\phi^H = \bigvee_{h\in H} \phi\circ T^{h^{-1}}$.  We also identify $\phi$ with the partition and $\s$-algebra it induces on $X$. Note that an element of the partition $\phi^H$ is of the form $\cap_{h\in H} T^{h^{-1}}A_h$, where each $A_h \in \phi$. 

Let the left Cayley graph $\G_L$ with respect to $(G,S)$ have vertex set $G$ and an edge between $g$ and $sg$ for each $g\in G$ and $s \in S$.  For $g_1,g_2\in G$ let $\Past(g_1;g_2)$ be the set of all $f\in G$ such that every path from $f$ to $g_1$ passes through $g_2$.  

If $\cF$ is a $\s$-algebra and $A \subset X$, then we write $\mu(A|\cF):X \to \R$ for the conditional expectation of the characteristic function $1_A$ conditioned on $\cF$. This is well-defined mod $\mu$. 

\begin{defn}
Let $\phi$ be a measurable partition of $X$. $(X,\mu,T,\phi)$ is a {\bf Markov process} if for every $g\in G$, $s\in S\cup S^{-1}$, $A \in \phi$, 
$$\mu\left(T^{(sg)^{-1}}A|\phi^{\Past(sg;g)}\right) = \mu\left(T^{(sg)^{-1}}A|\phi^g\right)=\mu\left(T^{s^{-1}}A|\phi\right).$$
\end{defn}

The second equality above holds for all $G$-invariant measures $\mu$.

We say that a Markov process $(X, \mu, T,\phi)$ has {\bf rational probabilities} if for every $a,b\in \C$, $s\in S$, $\mu(\phi^{-1}(a) \cap s\phi^{-1}(b)) \in \Q$.  For the remainder of the section we will specialize to Markov processes of symbolic dynamical systems, specifically $X=\A^G$.

\begin{lem}\label{lem : Markov}(Theorem 6.1 in \cite{bowen-entropy-2010a})
Let $(\A^G, \mu, \phi)$ be a Markov process.  Then $F_\mu(\phi) = f_\mu( \phi)$.
\end{lem}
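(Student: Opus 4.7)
The plan is to show $F_\mu(\phi^\rho) = F_\mu(\phi)$ for every $\rho \geq 0$; since $f_\mu(\phi) = \lim_{\rho\to\infty} F_\mu(\phi^\rho)$ by definition, this immediately gives $f_\mu(\phi) = F_\mu(\phi)$.

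The key ingredient is a Markov tree entropy formula: for any finite subtree $K$ of the Cayley tree of $G$,
$$H_\mu(\phi^K) = |K|\, H_\mu(\phi) - \sum_{e \in E(K)} I_{\mathrm{type}(e)},$$
where $I_j := I_\mu(\phi; \phi \circ T^{s_j^{-1}})$ is the mutual information between the values at the two endpoints of any type-$s_j$ edge (well-defined by $G$-invariance). I would prove this by enumerating $K = \{v_0, v_1, \ldots, v_n\}$ so that each $v_k$ with $k \geq 1$ has a unique neighbor $v_{p(k)}$ in $\{v_0,\ldots,v_{k-1}\}$, which is possible because $K$ is a subtree. The chain rule gives $H_\mu(\phi^K) = \sum_k H_\mu(\phi_{v_k}\mid \phi^{\{v_0,\ldots,v_{k-1}\}})$, and the Markov hypothesis, applied at the edge $\{v_{p(k)}, v_k\}$, reduces each conditional entropy to $H_\mu(\phi_{v_k}\mid \phi_{v_{p(k)}}) = H_\mu(\phi) - I_{\mathrm{type}(\{v_{p(k)}, v_k\})}$.

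Apply this formula to $K = \ball{e}{\rho}$ and to $K = \ball{e}{\rho} \cup s_i \ball{e}{\rho} = \ball{e}{\rho} \cup \ball{s_i}{\rho}$, each a connected subtree. Using $(\phi^\rho)^{T,\{e,s_i\}} = \phi^{\ball{e}{\rho} \cup s_i \ball{e}{\rho}}$, the definition of $F_\mu(\phi^\rho)$ rearranges as
$$F_\mu(\phi^\rho) = \Bigl[(1-2r) N_\rho + \sum_i N_{\rho,i}\Bigr] H_\mu(\phi) - \sum_{j=1}^r \Bigl[(1-2r) E_{\rho,j} + \sum_i E_{\rho,i,j}\Bigr] I_j,$$
where $N_\rho := |\ball{e}{\rho}|$, $N_{\rho,i} := |\ball{e}{\rho}\cup \ball{s_i}{\rho}|$, and $E_{\rho,j}$, $E_{\rho,i,j}$ count type-$s_j$ edges in the corresponding sets.

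It then suffices to prove the combinatorial identities $(1-2r) N_\rho + \sum_i N_{\rho,i} = 1$ and $(1-2r) E_{\rho,j} + \sum_i E_{\rho,i,j} = 1$ for all $\rho \geq 0$ and all $j$; once these hold, $F_\mu(\phi^\rho) = H_\mu(\phi) - \sum_j I_j = F_\mu(\phi)$. By $G$-invariance and inclusion-exclusion, the vertex identity reduces to $\sum_i |\ball{e}{\rho} \cap \ball{s_i}{\rho}| = N_\rho - 1$, which I plan to verify by classifying each $v \in \ball{e}{\rho}$ by the first letter of its reduced word (using that $d(v,s_i) = |v|-1$ when $v$ begins with $s_i$ and $|v|+1$ otherwise) and summing the geometric counts for spheres in the tree; the edge identity is handled analogously, after observing that in a tree the edges of $\ball{e}{\rho} \cup \ball{s_i}{\rho}$ split cleanly between the two balls (no edge can have one endpoint in $\ball{e}{\rho}\setminus\ball{s_i}{\rho}$ and the other in $\ball{s_i}{\rho}\setminus\ball{e}{\rho}$ when the balls intersect). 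The conceptual heart of the argument is the tree entropy formula, which is where the Markov hypothesis and the tree structure of the Cayley graph both enter; the main obstacle is the combinatorial bookkeeping in the two identities, which although elementary demands careful case analysis of reduced words by initial letter.
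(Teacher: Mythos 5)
The paper does not prove this lemma itself; it cites it as Theorem 6.1 of Bowen's earlier paper, and the argument there is indeed a Markov tree-entropy computation along the lines you sketch. Your overall strategy is the right one, and both combinatorial identities you state are in fact true. But there is a genuine error in your justification of the edge identity, and it traces to a left/right Cayley-graph confusion.

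The Markov hypothesis in the paper is stated with respect to the \emph{left} Cayley graph $\G_L$, whose edges join $g$ and $sg$. Your tree entropy formula, derived via the chain rule and the Markov property, therefore holds for subtrees of $\G_L$, with $E(K)$ meaning $\G_L$-edges. Now $\ball{e}{\rho}\cup s_i\ball{e}{\rho}$ is indeed a $\G_L$-subtree (each new vertex $w=s_ih$ with $\abs{w}=\rho+1$ is $\G_L$-adjacent to $s_i^{-1}w\in\ball{e}{\rho}$), so the formula applies. But your ``split cleanly'' observation is false in $\G_L$: for $r\ge 2$ and $\rho=1$, take $v=s_2\in\ball{e}{1}\setminus s_1\ball{e}{1}$ and $w=s_1s_2\in s_1\ball{e}{1}\setminus\ball{e}{1}$; then $w=s_1v$ is a $\G_L$-edge crossing between the two difference sets. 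Your first-letter classification and the clean splitting are correct only for the right Cayley graph ($g\sim gs$), which is the wrong graph here --- in fact $s_i\ball{e}{\rho}$ is not even $\G_L$-connected, so computing $E(\ball{e}{\rho}\cup s_i\ball{e}{\rho})$ by adding edge counts of the two pieces and subtracting the overlap gives a wrong intermediate answer. The identity still comes out true, but not for the reason you give: the correct $\G_L$-count is that each of the $(2r-1)^\rho$ new vertices $w$ (reduced words of length $\rho+1$ starting with $s_i$) contributes exactly one new edge $\{s_i^{-1}w,w\}$, which has $\G_L$-type $s_i$, so $E_{\rho,i,j}=E_{\rho,j}+(2r-1)^\rho\delta_{ij}$, and the edge identity reduces to $(r-1)E_{\rho,j}=(2r-1)^\rho-1$, which holds since $E_{\rho,j}=(N_\rho-1)/r$ by symmetry. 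So the gap is repairable, but as written the edge-counting step is incorrect and must be reworked consistently in $\G_L$.
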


\begin{lem}\label{lem : Markov simplification}
Let $(\A^G, \mu, \phi)$ be a Markov process with rational probabilities. Choose $\phi,\C, Y$ as in  \S \ref{S:restricted}.  Then $F_\mu(\phi) = F^Y_\mu(\phi) = f^Y_\mu(\phi).$
    %\item Let $(\A^G, \mu, \phi)$ be a Markov process and $\mu(Z)=1$.  Then $f_\mu(\phi) = \tf_\mu(\phi).$
\end{lem}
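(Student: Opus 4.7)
The argument proceeds via the chain $f^Y_\mu(\phi) \le F^Y_\mu(\phi) \le F_\mu(\phi) = f_\mu(\phi)$, with the final equality supplied by Lemma~\ref{lem : Markov}; the first inequality uses that for any finite $K \supset \{e\} \cup S$ one has $d^*_\sigma \le r \cdot d^K_\sigma$, and the second is immediate from the definitions (restricting to microstates with $P^\sigma_\psi(Y)=1$ only decreases the count). So it suffices to prove the reverse inequality $F_\mu(\phi) \le f^Y_\mu(\phi)$.

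Since $\mu$ has rational probabilities, pick $N \in \NN$ so that $N \cdot \mu(\phi^{-1}(a) \cap s\phi^{-1}(b)) \in \ZZ$ for every $s \in S$ and $a,b \in \C$. For $n \in N\NN$ and $\sigma \in \Hom(G,\sym(n))$, call $\psi \in \C^n$ an \emph{exact microstate} if for every $s \in S$ and $(a,b) \in \C^2$ the number of $v \in [n]$ with $\psi(v)=a$ and $\psi(\sigma(s)v)=b$ equals $n \cdot \mu(\phi^{-1}(a) \cap s\phi^{-1}(b))$; let $\Omega^{\mathrm{ex}}(\sigma)$ denote this set. Three facts drive the argument. (i) Every $\psi \in \Omega^{\mathrm{ex}}(\sigma)$ satisfies $P^\sigma_\psi(Y)=1$, because the forbidden nearest-neighbor patterns of $Y$ are precisely the pairs $(a,b)$ with zero $\mu$-mass on the corresponding edge cylinder (if one such pair had positive mass, it would appear in the support of $\Phi_*\mu$, contradicting $\mathrm{supp}(\Phi_*\mu) \subset Y$), and exact microstates realize no such pair. (ii) $\EE_{\sigma \sim u_n} |\Omega^{\mathrm{ex}}(\sigma)|$ grows at exponential rate $F_\mu(\phi)$ along $n \in N\NN$, by the multinomial counting formula underlying Lemma~\ref{lem : Markov} in \cite{bowen-entropy-2010a}. (iii) For $\sigma \sim u_n$ and $\psi$ chosen uniformly from $\Omega^{\mathrm{ex}}(\sigma)$, the $K$-statistics of $P^\sigma_\psi$ converge in probability to those of $\Phi_*\mu$, so all but an $o(1)$-fraction of exact microstates satisfy $d^K_\sigma(\phi,\psi) \le \epsilon$. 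Together these yield $\EE_\sigma|\{\psi : d^K_\sigma(\phi,\psi)\le \epsilon,\ P^\sigma_\psi(Y)=1\}| \ge (1-o(1))\,\EE_\sigma|\Omega^{\mathrm{ex}}(\sigma)|$, and taking logs gives the required lower bound.

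The main obstacle is step (iii). The Markov property of $\mu$ tells us that on a tree-like $K$-neighborhood the joint $K$-pattern distribution of $\mu$ is completely determined by its edge marginals; so if almost every vertex of $[n]$ has a tree-like $K$-neighborhood under $\sigma$, the exactness of edge statistics will propagate to near-exactness of $K$-statistics. The quantitative ingredients are that the expected number of vertices of $[n]$ whose $\sigma$-neighborhood of radius $\rho=\rho(K)$ fails to be a tree is $O(1)$ as $n \to \infty$ (a standard fact about uniformly random homomorphisms of a free group into $\sym(n)$), together with a concentration estimate applied to the random permutations comprising $\sigma$. Once (iii) is pinned down, assembly of (i)--(iii) completes the proof.
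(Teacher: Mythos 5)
Your opening reductions are fine: $f^Y_\mu(\phi)\le F^Y_\mu(\phi)\le F_\mu(\phi)=f_\mu(\phi)$ is exactly the chain of easy inequalities, and it does reduce the lemma to the single lower bound $F_\mu(\phi)\le f^Y_\mu(\phi)$. Steps (i) and (ii) are also sound (with a small slip: the forbidden edge patterns of $Y$ are \emph{contained in}, not necessarily equal to, the set of zero-$\mu$-mass pairs, but exact microstates avoid all zero-mass pairs so the conclusion holds). Your $\Omega^{\mathrm{ex}}(\sigma)$ is precisely the set $\{\psi: d^*_\sigma(\phi,\psi)=0\}$ whose growth rate the paper calls $G_\mu(\phi)$, and the lower bound $G_\mu(\phi)\ge F_\mu(\phi)$ is indeed the cited result from Bowen. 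Up to here you and the paper are in the same place.

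The divergence, and the gap, is step (iii). The paper does \emph{not} argue that most exact microstates have good $K$-statistics; instead it sidesteps concentration entirely with an explicit injection: it shows $g_\mu(\phi,B_m)\ge G_\mu(\phi^{B_m})$ by mapping each $\eta\in(\C^{B_m})^n$ with $d^*_\sigma(\phi^{B_m},\eta)=0$ to $\psi(v)=\eta(v)(e)$, and recovering $\eta$ from $\psi$ along geodesics in the free group (the ball $B_m$ is a tree). Since $(\A^G,\mu,\phi^{B_m})$ is itself Markov, $G_\mu(\phi^{B_m})\ge F_\mu(\phi^{B_m})=f_\mu(\phi^{B_m})=f_\mu(\phi)=F_\mu(\phi)$, and taking $\inf_m$ finishes the argument without any probabilistic input about the Schreier graph. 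Your (iii), by contrast, asserts a concentration statement which is nontrivial and left unproved: that the conditional law, given exact edge statistics, has $K$-marginals converging to those of $\Phi_*\mu$ is a large-deviations / local-weak-convergence fact that needs an actual argument (the ``tree-like neighborhoods'' observation alone does not force a uniformly random exact $\psi$ to be Markov-typical). Moreover, even granting (iii) in the form of convergence in probability over the joint law of $(\sigma,\psi)$, the deduction $\EE_\sigma\abs{\{\psi: d^K_\sigma\le\eps,\ P^\sigma_\psi(Y)=1\}}\ge(1-o(1))\,\EE_\sigma\abs{\Omega^{\mathrm{ex}}(\sigma)}$ does not follow: convergence in probability gives $\EE_\sigma\bigl[1-B_n(\sigma)/A_n(\sigma)\bigr]=o(1)$ (with $A_n=\abs{\Omega^{\mathrm{ex}}(\sigma)}$ and $B_n$ the good sub-count), whereas you need a bound on $\EE_\sigma A_n-\EE_\sigma B_n$ relative to $\EE_\sigma A_n$; these can differ badly when $A_n(\sigma)$ fluctuates exponentially in $\sigma$. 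So as written the crux of the lower bound is missing. If you want to pursue a concentration route you would need a quantified, $\sigma$-uniform statement that atypical exact microstates are exponentially rare; the paper's injective-map argument is a cleaner way to get the same lower bound.
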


We shall assume the above lemma and prove it later.  Let ${B_n} = B(e,n)$ be the ball of radius $n$ around the identity.  We also need the following lemma.

\begin{lem}\label{lem : Markov approx}
\begin{enumerate}
    \item Let $(\A^G,\mu,\phi)$ be a Markov process such that $\Phi$ is a homeomorphism and $\Phi_*\mu(Y) = 1$.  Then there exists a sequence $\mu_n \to \mu$ in weak* such that each $(\A^G,\mu_n,\phi)$ is Markov with rational probabilities and $\Phi_*\mu_n(Y) =1$.
    \item Let $(\A^G,\mu)$ be a symbolic dynamical system, $Z \subset \A^G$ a subshift of finite type with $\mu(Z) = 1$ and let $\phi:\A^G \to \A$ be the canonical observable (i.e. $\phi(x) = x_e$).  Then there exists a sequence $\{\mu_n\}_{n=1}^\infty$ of invariant Borel probability measures on $\A^G$ such that $\mu_n \to\mu$ in weak* as $n\to\infty$, $(\A^G,\mu_n,\phi^{B_n})$ is Markov, $\mu_n(Z)=1$, and $F_{\mu_n}(\phi^{B_n}) = F_{\mu}(\phi^{B_n})$ for all $n$. 
\end{enumerate}
\end{lem}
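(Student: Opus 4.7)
The plan for part (1) is to reduce to perturbing a Markov chain on $\C^G$. A $G$-invariant Markov measure $\mu$ on $\A^G$ with respect to the partition $\phi$ is determined, via $\Phi$ and the homeomorphism hypothesis, by the edge probabilities $q_s(a,b) = \mu(\phi^{-1}(a) \cap s\phi^{-1}(b))$ for $s \in S$ and $a,b \in \C$. These numbers satisfy the marginal-consistency equations $\sum_b q_s(a,b) = \pi(a)$ for every $s,a$ (with common $\C$-marginal $\pi$), together with $\sum_{a,b} q_s(a,b) = 1$. Moreover, since $Y$ is a nearest-neighbor SFT, the condition $\Phi_*\mu(Y)=1$ translates to $q_s(a,b) = 0$ on each forbidden edge pattern. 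All these constraints are linear with rational coefficients, so rational points are dense in the resulting polytope of admissible edge statistics. Choose a sequence of rational $q_s^{(n)}$ converging to $q_s$ and respecting the same zero-constraints; these induce rational Markov measures $\nu_n \to \Phi_*\mu$ on $\C^G$ with $\nu_n(Y)=1$. Pull back via the homeomorphism $\Phi$ to obtain measures $\mu_n$ on $\A^G$: by construction each $\mu_n$ is Markov with respect to $\phi$, has rational joint statistics, satisfies $\Phi_*\mu_n(Y) = 1$, and converges to $\mu$ in weak*.

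For part (2) the plan is the classical tree-Markovianization. Define $\mu_n$ as the unique $G$-invariant Borel probability measure on $\A^G$ that is Markov with respect to the observable $\phi^{B_n}$ and for which the joint distribution of $(\phi^{B_n}, \phi^{B_n}\circ T^{s_i^{-1}})$ agrees with that under $\mu$ for each $s_i \in S$. Existence and uniqueness are ensured by the tree structure of the Cayley graph of $G$: fix the $\phi^{B_n}$-value at the identity according to its $\mu$-law, then extend outward along each edge using the corresponding $\mu$-conditional distribution; shift-invariance of those conditionals forces shift-invariance of $\mu_n$. By construction the $B_n$- and $(B_n \cup s_iB_n)$-marginals of $\mu_n$ equal those of $\mu$, so $F_{\mu_n}(\phi^{B_n}) = F_\mu(\phi^{B_n})$ follows directly from the defining formula $F_\mu(\phi) = (1-2r)H_\mu(\phi) + \sum_i H_\mu(\phi^{\{e,s_i\}})$.

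To finish part (2), I would verify weak* convergence by noting that any cylinder event is supported in a finite $W \subset G$, and for $n$ large enough $W$ lies in the region where $\mu_n$ is forced to match $\mu$, so cylinder probabilities coincide in the limit. For $\mu_n(Z) = 1$, pick $d$ so that every forbidden pattern of the SFT $Z$ has domain inside $B_d$; for $n\ge d$ the $B_n$-marginal of $\mu_n$ equals that of $\mu$, so forbidden patterns remain zero-probability (if needed the sequence may be truncated to $n\ge d$). The step I expect to require the most care is the rational-density claim in part (1): one must check that the \emph{face} of the admissibility polytope cut out by the forbidden-pattern equations still contains rational points near any prescribed real point. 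Once that is established, the rest reduces to routine continuity arguments for the weak* topology and the functional $F$.
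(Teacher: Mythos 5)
Your proposal is correct and follows essentially the same route as the paper: for part (1) both arguments parametrize the $G$-invariant Markov measures supported in $Y$ by their edge statistics, observe these form a polytope cut out by rational linear constraints (the paper packages this as ``$Y$-weights''), use density of rational points in that polytope, and translate back to Markov measures; for part (2) the paper simply cites Section~9 of the referenced prior work, and your Markovianization plan matches that standard construction. One small correction: the marginal constraints you list, $\sum_b q_s(a,b)=\pi(a)$, must be accompanied by the symmetric condition $\sum_a q_s(a,b)=\pi(b)$ (the paper's ``balanced'' axiom) for the edge data to determine a shift-invariant Markov measure on the tree; since that is again a rational linear equation it does not affect the density argument.
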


We will assume (1) of the above lemma and prove it later. Item
(2) of the lemma  above is proven in Section 9 of \cite{bowen-entropy-2010a}. 
%\begin{claim}
%\begin{enumerate}
%    \item Lemma \ref{lem : Markov simplification}(1) and Lemma \ref{lem : Markov approx}(1) imply Lemma \ref{lem : Markov simplification}(2)
%    \item Lemma \ref{lem : Markov simplification}(2) and Lemma \ref{lem : Markov approx}(2) imply the conjecture.
%\end{enumerate}
%\end{claim}

We will also make use of the claim below. 
\begin{claim}\label{C:us}
The map $\mu \mapsto f^Y_\mu(\phi)$ is upper-semicontinuous in the following sense. If $\{\mu_n\}_{n=1}^\infty$ is a sequence of invariant Borel probability measures weak*-converging to $\mu$ and $\Phi_*\mu_n$ is supported on $Y$ for all $n$, then $f^Y_\mu(\phi) \ge \limsup_{n\to\infty} f^Y_{\mu_n}(\phi)$.
\end{claim}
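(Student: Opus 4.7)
The plan is to recognize that the claim is essentially a formal consequence of the particular structure of $f^Y_\mu(\phi)$ as an infimum over weak* open neighborhoods of $\Phi_*\mu$. Setting
\[ g(\cO) := \limsup_{m\to\infty} \frac{1}{m}\log \EE_{\sigma \sim u_m}\abs{\Omega_Y(\sigma,\cO)}, \]
one has $f^Y_\nu(\phi) = \inf_{\cO \ni \Phi_*\nu} g(\cO)$ for every invariant Borel $\nu$. Crucially, $g(\cO)$ depends only on $\cO$ and the fixed data $Y,\phi$---not on the measure. So the measure enters $f^Y_\nu(\phi)$ solely through which neighborhoods are allowed in the infimum, and the argument reduces to showing that weak* convergence eventually makes any neighborhood of $\Phi_*\mu$ a neighborhood of $\Phi_*\mu_n$.

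To execute this, I would first note that $\Phi:\A^G\to \C^G$ is continuous, since $\phi$ is continuous and each coordinate $\Phi(x)(g) = \phi(g^{-1}x)$ is therefore continuous in $x$. Consequently $\Phi_*:\Prob(\A^G)\to\Prob(\C^G)$ is weak*-continuous, so $\mu_n\to \mu$ weak* implies $\Phi_*\mu_n\to \Phi_*\mu$ weak*. Now fix any weak* open neighborhood $\cO$ of $\Phi_*\mu$. By weak* convergence, $\Phi_*\mu_n \in \cO$ for all sufficiently large $n$, so $\cO$ is an admissible neighborhood in the infimum defining $f^Y_{\mu_n}(\phi)$, giving $f^Y_{\mu_n}(\phi) \le g(\cO)$. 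Taking $\limsup_{n\to\infty}$ and then $\inf$ over all such $\cO$ yields $\limsup_{n\to\infty}f^Y_{\mu_n}(\phi) \le f^Y_\mu(\phi)$, which is exactly the asserted upper-semicontinuity.

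The main (and essentially only) content of the argument is the weak* continuity of $\Phi_*$; after that everything is mechanical, so I do not anticipate any genuine obstacle. I would remark that the hypothesis that each $\Phi_*\mu_n$ is supported on $Y$ is not actually needed for the semicontinuity argument itself, but it guarantees via the Portmanteau theorem (since $Y$ is closed) that $\Phi_*\mu$ is supported on $Y$ as well, ensuring that the quantities $f^Y_{\mu_n}(\phi)$ and $f^Y_\mu(\phi)$ are the meaningful ones for the application to Theorem~\ref{thm:SFT}.
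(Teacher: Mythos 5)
Your proof is correct and follows essentially the same approach as the paper; the key observation in both is that the quantity inside the infimum depends only on the neighborhood $\cO$, not on the measure, so eventually any $\cO\ni\Phi_*\mu$ is admissible in the infimum defining $f^Y_{\mu_n}(\phi)$. Your presentation is marginally more streamlined in that you use $\cO$ itself as the admissible neighborhood, whereas the paper's proof passes through a smaller auxiliary neighborhood $\cO_n\subset\cO$ of $\Phi_*\mu_n$ and invokes monotonicity of $\Omega_Y(\sigma,\cdot)$, but the substance is the same.
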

\begin{proof}
Let $\nu = \Phi_*\mu$ and $\nu_n = \Phi_*\mu_n$.  Fix a weak* neighborhood $\cO$ of $\nu$. For all large enough $n$, $\nu_n \in \cO$.  For each such $n$, for all small enough weak* neighborhoods $\cO_n$ of $\nu_n$, $\cO_n \subset \cO$ so that $\Om_Y(\s,\cO_n) \subset \Om_Y(\s, \cO)$. It follows that  
$$\limsup_{m \to \infty} \frac{1}{m} \log \EE_{\sigma \sim u_m} \abs{\Omega_Y(\sigma, \cO)} \ge \limsup_{n\to\infty} f^Y_{\mu_n}(\A^G,\phi).$$
Take the infimum over $\cO$ to obtain $f^Y_\mu(\phi) \ge \limsup_{n\to\infty} f^Y_{\mu_n}(\phi)$.
\end{proof}

%\begin{proof}[Proof of Lemma \ref{lem : Markov simplification}]

%We have $$\tf_{\mu_n}(\phi) = \tF_{\mu_n}(\phi) = F_{\mu_n}(\phi)$$ by Lemma \ref{lem : Markov simplification}(1).  Next, by applying $\limsup_{n\to\infty}$, we have $$\tf_\mu(\phi) \ge \limsup_{n\to\infty}\tf_{\mu_n}(\phi) = \limsup_{n\to\infty}\tF_{\mu_n}(\phi)= \limsup_{n\to\infty}F_{\mu_n}(\phi)$$ where the first inequality is by Claim 2(2).  By using Claim 2(1) we then have $\tf_\mu(\phi) \ge F_\mu(\phi)$.  Finally $F_\mu(\phi) = f_\mu(\phi)$ by Lemma \ref{lem : Markov}.

%\end{proof}
\begin{proof}[Proof of Theorem \ref{thm:SFT}]
Let $\phi:\A^G \to \A$ be the canonical observable $\phi(x)=x_e$. By item (2) of Lemma \ref{lem : Markov approx}, there exists a sequence $\{\mu_n\}_{n=1}^\infty$ of invariant Borel probability measures on $\A^G$ such that $\mu_n \to\mu$ in weak* as $n\to\infty$, $(\A^G,\mu_n,\phi^{B_n})$ is Markov, $\mu_n(Z)=1$, and $F_{\mu_n}(\phi^{B_n}) = F_{\mu}(\phi^{B_n})$ for all $n$. 

By item (1) of Lemma \ref{lem : Markov approx} applied to each $\mu_n$, there exist $\mu'_n$ such that $\mu'_n \to\mu$ (weak*) as $n\to\infty$, $(\A^G,\mu'_n,\phi^{B_n})$ is Markov with rational probabilities, $\mu'_n(Z)=1$, and $F_{\mu'_n}(\phi^{B_n}) \ge F_{\mu}(\phi^{B_n})-o(n)$. 

We claim
$$f^Z_{\mu'_n}(\phi) = f^Y_{\mu'_n}(\phi^{B_n})= F^Y_{\mu'_n}(\phi^{B_n}) = F_{\mu'_n}(\phi^{B_n}) \ge  F_{\mu}(\phi^{B_n}) - o(n)$$
where $Y=\Phi_n(Z)$ and $\Phi_n: \A^G \to (\A^{B(e,n)})^G$ is the equivariant map determined by $\phi^{B_n}$ (so $\Phi_n(x)(g)=\phi^{B_n}(g^{-1}x)$). The first equality holds by Claim \ref{C : restricted_invar} and the second and third equalities follow from Lemma \ref{lem : Markov simplification} (note that $Y$ is a nearest neighbor subshift of finite type when $n$ is large enough).  By applying $\limsup_{n\to\infty}$ to all parts of the equality and using the upper semi-continuity from Claim \ref{C:us}, we obtain $f^Z_\mu(\phi) \ge f_\mu(\phi)$ as desired.

\end{proof}

\begin{proof}[Proof of Lemma \ref{lem : Markov simplification}]
First we show that $F_\mu(\phi) = F^Y_\mu(\phi)$.  Because $F_\mu(\phi) \ge F^Y_\mu(\phi)$ is immediate from the definitions, it suffices to show $F_\mu(\phi) \le F^Y_\mu(\phi)$.
Let 
$$G_\mu(\phi) = \limsup_{n\to\infty} \frac{1}{n} \log\EE\left(\left|\left\{\psi \in \C^n: d^*_\s (\phi,\psi) =0 \right\}\right|\right) .$$
Now $F^Y_\mu(\phi) \ge G_\mu(\phi)$ because for any homomorphism $\s: G \to \sym(n)$, all $\psi \in \C^n$ such that $d^*_\s(\phi,\psi) = 0$ automatically satisfy $P^\s_\psi(Y) = 1$ whenever $Y$ is a nearest neighbor SFT. By Lemma 2.2 in \cite{bowen-entropy-2010b}, $G_\mu(\phi) \ge F_\mu(\phi)$, so $F_\mu(\phi) = F^Y_\mu(\phi)$.

Next we show that $F^Y_\mu(\phi) = f^Y_\mu(\phi)$. Given a finite subset $K \subset G$, let 
$$g_\mu(\phi,K) =  \limsup_{n\to\infty} \frac{1}{n} \log \EE\left(\left|\left\{\psi \in \C^n: d^K_\s (\phi,\psi) =0 \right\}\right|\right).$$
Let ${B_m} = B(e,m)$ be the ball of radius $m$ around the identity. We will use the claim below:
\begin{claim}
For every $m$, $g_\mu(\phi,{B_m}) \geq G_\mu(\phi^{B_m})$.
\end{claim}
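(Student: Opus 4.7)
The plan is, for each homomorphism $\sigma \in \Hom(G,\sym(n))$, to construct an injection from
\[
\cT_\sigma := \{\tilde\psi \in (\C^{B_m})^n \st d^*_\sigma(\phi^{B_m}, \tilde\psi) = 0\}
\]
into
\[
\Psi_\sigma := \{\psi \in \C^n \st d^{B_m}_\sigma(\phi, \psi) = 0\}.
\]
The pointwise inequality $|\cT_\sigma| \le |\Psi_\sigma|$ then passes through $\EE_{\sigma \sim u_n}$ and $\limsup_n \frac{1}{n}\log$ to give $G_\mu(\phi^{B_m}) \le g_\mu(\phi, B_m)$. My injection will be the ``centering'' map $\pi$ defined by $\pi(\tilde\psi)(v) := \tilde\psi(v)(e)$.

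The core technical claim is that every $\tilde\psi \in \cT_\sigma$ is recovered as the pullback name of its center:
\[
\tilde\psi(v)(g) \;=\; \psi(\sigma(g)^{-1} v) \qquad \text{for all } v \in [n],\ g \in B_m,
\]
where $\psi := \pi(\tilde\psi)$. Granted this, injectivity of $\pi$ on $\cT_\sigma$ is immediate (since $\tilde\psi$ is then entirely determined by $\psi$), and taking the $\{e\}$-coordinate marginal of the hypothesis $d^*_\sigma(\phi^{B_m}, \tilde\psi) = 0$ shows $d^{B_m}_\sigma(\phi, \psi) = 0$, so $\psi \in \Psi_\sigma$.

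The key point in proving this identity is to exploit that $d^*_\sigma = 0$ is an \emph{exact} equality of probability measures. Because $\mu$ is shift-invariant, the pushforward $(\phi^{B_m})^{\{e,s_i\}}_* \mu$ is supported on ``consistent'' pairs $(a,b) \in \C^{B_m} \times \C^{B_m}$---those satisfying $a(h) = b(s_i^{-1} h)$ whenever both $h$ and $s_i^{-1} h$ lie in $B_m$. Exactness forces the empirical distribution of $\tilde\psi^{\sigma, \{e,s_i\}}$ to have the same support, so every individual pair $(\tilde\psi(v), \tilde\psi(\sigma(s_i)^{-1} v))$ is consistent. Writing any $g \in B_m$ in its unique reduced form $s_{i_1}^{\eps_1} \cdots s_{i_k}^{\eps_k}$ with $k = |g| \le m$, I will induct on $k$, peeling off one factor at a time using the consistency just established.

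The only real obstacle---mild---is verifying that the induction stays inside $B_m$: at each step I need both the current word and its shortened suffix to be in $B_m$, which is automatic because every suffix of a reduced word of length $\le m$ still has length $\le m$. This is where the argument crucially uses that $G$ is free: reduced words are monotone in length and unique. The analogous argument would fail for $\ZZ^d$, consistent with Theorem \ref{thm:SFT} itself failing there.
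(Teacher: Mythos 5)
Your proof is correct and follows essentially the same route as the paper: the same centering injection $\pi(\tilde\psi)(v)=\tilde\psi(v)(e)$, the same pairwise consistency extracted from $d^*_\sigma=0$ viewed as an exact equality of distributions on nearest-neighbor pairs, and the same recovery of $\tilde\psi(v)(g)$ by peeling one generator at a time off the reduced word for $g \in B_m$. The paper simply states the consistency relation $\eta(v)(g)=\eta(\sigma(s^{-1})v)(s^{-1}g)$ for $s\in S$, $g\in B(e,m)\cap B(s,m)$ and unrolls the telescoping product directly, but the underlying argument is identical to yours.
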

\begin{proof}
Fix $\s$. By taking limits, it suffices to show for all $n$ large enough  
$$\left| \left\{\psi \in \C^n: d^{B_m}_\s (\phi,\psi) =0 \right\}\right| \geq \left|\left\{\eta \in (\C^{B_m})^n: d^*_\s (\phi^{B_m},\eta) =0 \right\}\right|.$$
It suffices to find an injective map from  $\{\eta \in (\C^{B_m})^n: d^*_\s (\phi^{B_m},\eta) =0 \}$ to $\{\psi \in \C^n: d^{B_m}_\s (\phi,\psi) =0 \}$.

Let $\eta \in (\C^{B_m})^n$ be such that $d^*_\s (\phi^{B_m},\eta) =0$.  Define $\psi \in \C^n$ by $\psi(v)=\eta(v)(e)$. Note that $d_\s^{B_m}(\phi,\psi) = 0$. We claim that the map $\eta \mapsto \psi$ is injective.

Observe that $d^*_\s(\phi^{B_m},\eta) = 0$ implies that for any $s\in S$, $g \in B(e,m) \cap B(s,m)$, $\eta(v)(g) = \eta(\s(s^{-1})v)(s^{-1}g)$.  In particular, let $f \in B(e,m)$ and write $f = s_1s_2\cdots s_l$.  Then 
\begin{eqnarray*}
\eta(v)(f) &=& \eta(v)(s_1\cdots s_l) = \eta(\s(s_1^{-1})v)(s_2\cdots s_l) \\
&=& \cdots = \eta(\s(s_l^{-1}\cdots s_1^{-1})v)(e)= \psi(\s(f^{-1})v).
\end{eqnarray*}
This shows $\eta$ can be recovered from $\psi$. So $\eta \mapsto \psi$  is injective.

\end{proof}
Let 
$$f^Y_\mu(\phi,K) = \inf_{\eps>0} \limsup_{n\to\infty} \frac{1}{n} \log\EE\left(\left|\left\{\psi  \in \C^n: d^K_\s (\phi,\psi) \le \eps, P^\s_\psi(Y) = 1\right\}\right|\right).$$
By an argument similar to why $F^Y_\mu(\phi) \ge G_\mu(\phi)$, we obtain $f^Y_\mu(\phi,K) \ge g_\mu(\phi,K)$ for any finite $K \subset G$ containing $S \cup S^{-1} \cup \{e\}$.  Thus we have for every $m\in \NN$,
$$f^Y_\mu(\phi,{B_m}) \ge g_\mu(\phi,{B_m}) \ge G_\mu(\phi^{B_m}) \ge F_\mu(\phi^{B_m}) = f_\mu(\phi^{B_m}) = f_\mu(\phi) = F_\mu(\phi) = F^Y_\mu(\phi)$$
noting that $(A^G,\mu,\phi^{B_m})$ is also Markov by Lemma 6.3 in \cite{bowen-entropy-2010a}. The result follows by applying $\inf_m$ to all parts of the above (and $f^Y_\mu(\phi) \le F^Y_\mu(\phi)$ by definition).
\end{proof}

%Let $F(\phi,\eps,n,\s) = \{\psi\in \C^n: d^*_\s(\phi,\psi) \leq \eps\}$ and $\tF(\phi,\eps,n,\s) = \{\psi\in \C^n: d^*_\s(\phi,\psi) \leq \eps,P^\s_\psi(Y)=1\}$.  Let $\bS = S \cup \{e\}$. For $\psi \in F(\phi,\eps,n,\s)$, let $B(\psi):=\{v\in[n]: \psi^\bS(v) \notin \cW(Y)\}.$ and $G(\psi) = [n]\setminus B(\psi)$.  Then notice that $|N(\psi)| \le \eps n$ and 

\begin{proof}[Proof of Lemma \ref{lem : Markov approx}(1)]

We need an analogue of Lemma 2.3 from \cite{bowen-entropy-2010b}. To motivate it, suppose that $\nu$ is an invariant probability measure on $\A^G$. For $a,b \in \A$ and $1\le i \le r$, define
$$W_\nu(a) = \nu(\{ z \in \A^G:~ z(e) = a\})$$
$$W_\nu(a,b;i) = \nu(\{z \in \A^G:~z(e)=a \textrm{ and } z(s_i) = b \}).$$
Then $W_\nu$ is a weight. Formally, a {\bf weight} is a function
$$W: \A \sqcup (\A \times \A \times [r]) \to [0,1]$$
satisfying the conditions below.
\begin{enumerate}
\item  (Balanced). For every $i$, and every $a \in \A$, $W(a) = \sum_{b\in \A} W(a,b;i)  = \sum_{b \in \A} W(b,a;i).$
    \item (Normalized).  $\sum_a W(a) = 1$.
\end{enumerate}

Given two weights $W_1$,$W_2$ define $d(W_1,W_2) = \sum_{(a,b;i) \in \A \times \A \times [r]}|W_1(a,b;i)-W_2(a,b;i)|$.

A weight $W$ is a {\bf $Y$-weight} if for every $(a,b;i) \in \A \times \A \times [r]$, if there does not exist $x\in X$ with $\Phi(x)\in Y$, $x(e)=a$ and $x(s_i)=b$ then $W(a,b;i)=0$. If the measure $\nu$ is supported on $\Phi^{-1}(Y)$ then $W_\nu$ is a $Y$-weight. 

A $Y$-weight as defined above induces an invariant transition system as defined in section 7 in \cite{bowen-entropy-2010a}, and there it is shown that an invariant transition system induces a Markov process. In particular, for every $Y$-weight $W$, there is a unique Markov measure $\nu$ such that $W_\nu=W$.

The equations defining $Y$-weights are linear equations with rational coefficients. Therefore, the subset of  rational-valued $Y$-weights is dense (with respect to the distance function defined above) in the space of all $Y$-weights. It is straightforward to check that convergence of a sequence of weights implies convergence in weak* of the associated Markov measures and that a rational-valued $Y$-weight corresponds to a Markov measure with rational probabilities. So this implies the lemma.
\end{proof}

\section{Standard hypotheses and notation}\label{S:prelim}

\begin{defn}
The {\bf standard hypotheses} for this paper are the following: $(X,\mu)$ is a standard probability space, $G=\langle S \rangle$ is a free group with free generating set $S=\{s_1,\ldots s_r\}$, $T=(T^g)_{g\in G}$, $U=(U^g)_{g\in G}$ are essentially free pmp actions of $G$ on $(X,\mu)$ with the same orbits. Define cocycles $\a: G\times X \to G$, $\beta:G \times X \to G$ by
$$U^{\a(g,x)}x = T^gx, \quad T^{\beta(g,x)}x = U^g x.$$

Note that the above definition gives 
\begin{eqnarray}\label{E:inv1}
 \a(\b (g,x),x) = g= \b(\a(g,x),x).
\end{eqnarray}
We assume there exist a finite set $\B$ and a measurable map $\g:X \to \B$ such that $\g$ is both $T$-generating and $U$-generating. We do not make any quantitive orbit-equivalence assumptions until the end of the paper. 
%Let $\cB_{T,U}\subset \cB_X$ be the {\bf orbit-change} sigma-algebra. This is the smallest Borel sigma-sub-algebra with respect to which both $\a$ and $\beta$ are measurable. This sigma-algebra is both $T^G$ and $U^G$ invariant. Lastly, 

\end{defn}

\section{The space of orbit-change maps}\label{S:orbit-change}

 For $x\in X$, define the {\bf orbit-change maps} $\hat{\alpha}_x:G \to G$ and $\hat{\beta}_x: G \to G$ by 
$$\hat{\alpha}_x(g)=\a(g^{-1},x)^{-1}, \quad \hat{\beta}_x(g)=\beta(g^{-1},x)^{-1}.$$
Then $\hat{\alpha}_x, \hat{\beta}_x\in \sym_e(G)$ where $\sym_e(G)$ is the set of bijections $\phi:G \to G$ such that $\phi(e)=e$. These maps satisfy the multiplication rules
$$\hat{\a}_{T^gx}(gh) = \a(g,x) \hat{\a}_x(h), \quad \hat{\beta}_{U^gx}(gh) = {\beta}(g,x) \hat{\beta}_x(h).$$

Also note that the above definitions and (\ref{E:inv1}) gives
\begin{eqnarray}\label{E:inv2}
\hal_x^{-1}=\hbe_x.
\end{eqnarray}

Note $\sym_e(G)$ is a group under composition. Moreover, it is a Polish group with respect to the pointwise convergence topology. In fact, if $\{g_i\}_{i=1}^\infty$ is an enumeration of $G$ then
$$d(\phi,\psi) = \sum_{i=1}^\infty 2^{-i} (1_{\phi(g_i)=\psi(g_i)} + 1_{\phi^{-1}(g_i)=\psi^{-1}(g_i)})$$
is a complete separable metric inducing the pointwise convergence topology.

We would like to say that the maps $x \mapsto \hat{\alpha}_x$ and $x \mapsto \hat{\beta}_x$ are equivariant. So we define actions $\Theta, \mho$ of $G$ on $\sym_e(G)$ by
\begin{eqnarray*}
(\Theta^h\phi)(g) &=& \phi(h^{-1})^{-1}\phi(h^{-1}g),\\
(\mho^h\phi)(g) &=& h\phi(\phi^{-1}(h^{-1})g) = \left(\Theta^{\phi^{-1}(h^{-1})^{-1}}\phi\right)(g).
\end{eqnarray*}
These two actions of $G$ have the same orbits.

Because of the multiplication rules, the map $x \mapsto \hat{\a}_x$ is $(T,\Theta)$-equivariant in the sense that 
\begin{eqnarray}\label{E:ha1} 
\hat{\a}_{T^gx}=\Theta^g\hat{\a}_x. 
\end{eqnarray}
It is also $(U,\mho)$-equivariant:
\begin{eqnarray}\label{E:ha2} 
\hat{\a}_{U^gx}=\mho^g\hat{\a}_x.
\end{eqnarray}
For example (\ref{E:ha2}) follows from (\ref{E:ha1}) and the observation (which follows from (\ref{E:inv2})) that
\begin{eqnarray}\label{E:inv3}
 \hal_x^{-1}(h^{-1})^{-1} = \b(h,x).
\end{eqnarray}
Similarly, the map $x \mapsto \hat{\beta}_x$ is $(T,\mho)$- and $(U,\Theta)$-equivariant:
$$\hat{\beta}_{T^gx}=\mho^g\hat{\beta}_x, \quad  \hat{\beta}_{U^gx}=\Theta^g\hat{\beta}_x.$$
These properties follow from the cocycle identities (\ref{E:cocycle1}), (\ref{E:cocycle2}).

\subsection{A partially symbolic model}\label{sec:Gamma}
Recall that $\g:X \to \B$ is generating for the $U$ and $T$ actions. Recall symbolic dynamics from section 2: $\B^G$ is the space of all functions $y:G \to \B$ with the topology of pointwise convergence on finite sets, for $x \in \B^G$ we will use either function notation or subscripts (so $y(g)=y_g$) whichever is most convenient, and $G$ acts on $\B^G$ by the left shift action $(gy)(f)=y(g^{-1}f)$. 

Define $\G: X \to \B^G$  by
 $$\G(x)_g = \g(T^{g^{-1}}x).$$
 This map is equivariant in the sense that $\G(T^h x) = h \G(x)$ for all $h \in G$, $x\in X$. Because $\g$ is $T$-generating, this map is also 1-1 (modulo null sets).
 
Define $\tGamma: X \to \sym_e(G) \times \B^G$ by $\tGamma(x) = (\hat{\alpha}_x, \G(x))$. Also define actions $\tTheta$, $\tilde{\mho}$ of $G$ on $\sym_e(G) \times \B^G$ by
\begin{eqnarray}\label{E:defn}
\tTheta^g(\phi,y) = (\Theta^g\phi, gy), \quad \tilde{\mho}^g (\phi, y)  = (\mho^g \phi, \phi^{-1}(g^{-1})^{-1}y).
\end{eqnarray}
By (\ref{E:inv3}), (\ref{E:ha1}), and (\ref{E:ha2}), $\tGamma$ is doubly-equivariant in the sense that
$$\tGamma(T^g x) = \tTheta^g \tGamma(x), \quad \tGamma(U^g x) = \tilde{\mho}^g \tGamma(x).$$
Because $\g$ is both $T$ and $U$ generating, $\tGamma$ is injective (modulo null sets). So to prove Theorem \ref{thm:main}, it suffices to prove $f_{\tGamma_*\mu}(\tTheta) = f_{\tGamma_*\mu}( \tilde{\mho})$.

\section{A subshift of finite type for bounded orbit-equivalences}\label{S:SFTBOE}

The goal of this section is to show that there is a subshift of finite type which encodes bounded orbit-change maps. To begin, let $\A = G^{S \cup S^{-1}}$ where $S=\{s_1,\ldots, s_r\}$ is the generating set. If $x \in \A^G$, $g\in G$ and $s\in S \cup S^{-1}$ then we write $x_g(s) = x(g)(s) \in G$ to simplify notation.

%Let $\sym_e(G)$ be the group of bijective maps $\phi:G \to G$ with $\phi(e)=e$. 

Define $\cE:\sym_e(G) \to \A^G$ by 
$$\cE(\phi)_h(s) = \Theta^{h^{-1}}\phi(s)=\phi(h)^{-1}\phi(hs).$$
This map is an embedding in the sense that it is equivariant, continuous and injective. The equivariance means that 
\begin{eqnarray}\label{E:equi1}
\cE(\Theta^h \phi) = h\cE(\phi)
\end{eqnarray}
for $h \in G$, $\phi \in \sym_e(G)$. In fact $\cE$ is determined by this equivariance condition and the formula $\cE(\phi)_{e}(s)=\phi(s)$.
% where $\Theta$ is the action of $G$ on $\sym_e(G)$ defined by ().

If $x = \cE(\phi)$ and $s_1,\ldots, s_n \in S\cup S^{-1}$ then
\begin{eqnarray}\label{E:inverse}
\phi(s_1\cdots s_n) = x_{e}(s_1)x_{s_1}(s_2)x_{s_1s_2}(s_3)\cdots x_{s_1\cdots s_{n-1}}(s_n).
\end{eqnarray}
This is obtained via induction on $n$. This verifies that $\cE$ is injective. We have the following more general fact:
% Moreover, by writing an element $g\in G$ as a word in $S\cup S^{-1}$, it proves:
\begin{lem}\label{L:inverse}
If $\phi \in \sym_e(G)$, $x = \cE(\phi)$, $g\in G$ and $s_1,\ldots, s_n \in S\cup S^{-1}$ then
\begin{eqnarray*}
\phi(gs_1\cdots s_n)=\phi(g)x_g(s_1)x_{gs_1}(s_2)\cdots x_{gs_1\cdots s_{n-1}}(s_n).
\end{eqnarray*}
Moreover, for any $t \in G$ and $t_1,\ldots, t_m \in S \cup S^{-1}$, the equation
\begin{eqnarray*}
\phi^{-1}(gt)=\phi^{-1}(g)t_1\cdots t_m
\end{eqnarray*}
is true if and only if 
$$t= x_{\phi^{-1}(g)}(t_1)\cdots x_{\phi^{-1}(g)t_1\cdots t_{m-1}}(t_m).$$
Moreover, such a sequence is unique if we require $m=|t_1\cdots t_m|_G$.
\end{lem}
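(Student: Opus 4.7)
The plan is to prove the three clauses in sequence, with the second and third reducing to the first by elementary manipulations. The only nontrivial ingredient is the defining identity $\cE(\phi)_h(s)=\phi(h)^{-1}\phi(hs)$, which I would rearrange as $\phi(hs)=\phi(h)x_h(s)$ and apply repeatedly.

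First I would establish the formula
$$\phi(gs_1\cdots s_n)=\phi(g)\,x_g(s_1)\,x_{gs_1}(s_2)\cdots x_{gs_1\cdots s_{n-1}}(s_n)$$
by induction on $n$. The base $n=0$ is the tautology $\phi(g)=\phi(g)$, and the inductive step applies the rearranged defining identity with $h=gs_1\cdots s_{n-1}$ and $s=s_n$, using the inductive hypothesis on $\phi(gs_1\cdots s_{n-1})$. The case $g=e$ recovers equation~(\ref{E:inverse}), so nothing new happens in the general $g$ case beyond carrying the prefix $\phi(g)$ along.

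Next, for the biconditional in the second part, I would set $h=\phi^{-1}(g)$ so that $\phi(h)=g$. Applying $\phi$ to both sides of $\phi^{-1}(gt)=ht_1\cdots t_m$ transforms that equation into $gt=\phi(ht_1\cdots t_m)$, which by the first part equals $g\,x_{\phi^{-1}(g)}(t_1)\cdots x_{\phi^{-1}(g)t_1\cdots t_{m-1}}(t_m)$. Cancelling $g$ on the left yields exactly the stated formula for $t$; since $\phi$ is a bijection, every step is reversible, so the two assertions are equivalent.

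Finally, for the uniqueness under $m=\abs{t_1\cdots t_m}_G$: the product $t_1\cdots t_m=\phi^{-1}(g)^{-1}\phi^{-1}(gt)\in G$ is a fixed element determined by $\phi,g,t$, and the minimality hypothesis forces $t_1\cdots t_m$ to be a reduced expression for it. Since $G$ is free, reduced expressions are unique, so $(t_1,\ldots,t_m)$ is determined. I do not expect any technical obstacle in this proof; the only care needed is correct bookkeeping of the indices through the induction, together with the standard fact that reduced words in a free group are unique.
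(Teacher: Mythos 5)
Your proof is correct and follows essentially the same route as the paper's: an induction on the defining identity $\phi(hs)=\phi(h)x_h(s)$ for the first clause, applying $\phi$ to both sides and cancelling $g$ for the biconditional, and uniqueness of reduced words in a free group for the final clause. The only cosmetic difference is that you run the induction directly in $n$ with general $g$, whereas the paper first isolates the $g=e$ case as equation~(\ref{E:inverse}) and then prepends a word for $g$; these are the same argument.
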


\begin{proof}
The first statement follows from (\ref{E:inverse}) by writing $g\in G$ as a word in $S\cup S^{-1}$. 

To prove the second statement, suppose $\phi^{-1}(gt)=\phi^{-1}(g)t_1\cdots t_m$. Apply $\phi$ to both sides, then apply the first statement to obtain
\begin{eqnarray*}
gt &=& \phi( \phi^{-1}(g)t_1\cdots t_m )\\
    &=& g x_{\phi^{-1}(g)}(t_1)x_{\phi^{-1}(g)t_1}(t_2)\cdots x_{\phi^{-1}(g)t_1\cdots t_{m-1}}(t_m).
\end{eqnarray*}
After cancelling $g$ from both sides, we obtain $t= x_{\phi^{-1}(g)}(t_1)x_{\phi^{-1}(g)t_1}(t_2)\cdots x_{\phi^{-1}(g)t_1\cdots t_{m-1}}(t_m)$ as required. The converse is obtained by following the same steps in reverse. Moreover, $t_1,\ldots, t_m \in S \cup S^{-1}$ are uniquely determined by two conditions: $m=|t_1\cdots t_m|$ and 
$$t_1\cdots t_m = \phi^{-1}(g)^{-1}\phi^{-1}(gt).$$
\end{proof}

For $\rho \in \NN$, let $\A_\rho = \ball{e}{\rho}^{S \cup S^{-1}}$ which we view as a subset of $\A$. Because $\A_\rho$ is finite, $\A_\rho^G$ is compact. Also, let $\sym_\rho(G) \subset \sym_e(G)$ be the subset of $\phi \in \sym_e(G)$ such that
$$|(\Theta^{g^{-1}}\phi)(s)|_G = |\phi(g)^{-1}\phi(gs)|_G \le \rho~ \textrm{ and } ~|(\Theta^{g^{-1}}\phi^{-1})(s)|_G = |\phi^{-1}(g)^{-1}\phi^{-1}(gs)|_G \le \rho$$
for all $s\in S\cup S^{-1}$ and $g\in G$.  In other words, if $x=\cE(\phi)$ and $y=\cE(\phi^{-1})$ then 
$|x_g(s)|_G \le \rho$ and $|y_g(s)|_G \le \rho$. In particular, if $\phi \in \sym_\rho(G)$, then $\cE(\phi) \in \A_\rho^G$.

%Note that if $(X,\mu,T)$ and $(X,\mu,U)$ are bounded orbit-equivalent then there is an $\rho>0$ such that $\cE(\a_x) \in \A_\rho^G$ for a.e. $x\in X$. 

The main theorem of this section is:
\begin{thm}\label{T:SFT1}
For $\rho \in \NN$, $\cE(\sym_\rho(G)) \subset \A_\rho^G$ is a subshift of finite type.
 \end{thm}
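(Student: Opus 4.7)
Write $Z := \cE(\sym_\rho(G))$. The plan is to realise $Z$ as the subshift of $\A_\rho^G$ cut out by a finite list of forbidden patterns on a single fixed finite window. The easy preliminary work is to check that $Z$ is shift-invariant and closed. Shift-invariance is immediate from the equivariance (\ref{E:equi1}) together with the direct check that $\Theta^g$ preserves $\sym_\rho(G)$. Closedness will follow once one sees that $\sym_\rho(G)$ itself is compact in the pointwise-convergence topology: the step bound forces $\phi(\ball{e}{n})\subseteq \ball{e}{n\rho}$ and likewise for $\phi^{-1}$, so $\sym_\rho(G)$ sits inside the compact product $\prod_g \ball{e}{|g|_G\rho}$, and is closed there because pointwise limits of bijections with $\rho$-bounded (inverse) step inherit the bound and remain bijections (surjectivity of the limit is obtained by diagonal extraction, using that each $\phi^{-1}_n(h)$ lies in the fixed finite set $\ball{e}{|h|_G\rho}$). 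Continuity of $\cE$ then makes $Z$ compact, hence closed.

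For the SFT property I would fix $R$ large enough compared to $\rho$ and let $P \subseteq \A_\rho^{\ball{e}{R}}$ be the (finite) set of patterns of the form $\cE(\phi)\restriction \ball{e}{R}$ for some $\phi \in \sym_\rho(G)$. The claim is
\[
Z \;=\; \bigl\{\,x \in \A_\rho^G \st (g^{-1}x)\restriction \ball{e}{R} \in P \text{ for every } g \in G\,\bigr\},
\]
which exhibits $Z$ as the SFT with forbidden patterns $\A_\rho^{\ball{e}{R}} \setminus P$. The $\subseteq$ inclusion is immediate from $g^{-1}\cE(\phi) = \cE(\Theta^{g^{-1}}\phi)$ and the $\Theta$-invariance of $\sym_\rho(G)$. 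For the reverse inclusion, given such an $x$, I would use the path formula of Lemma \ref{L:inverse} to define $\phi: G \to G$. Path-independence is unambiguous because the Cayley graph of the free group $G$ is a tree, so it reduces to the nearest-neighbor cancellation $x_g(s)x_{gs}(s^{-1}) = e$, which holds automatically because every local window lies in $P$. Then $\phi(e) = e$ by construction and $\phi$ has $\rho$-bounded step since $x\in\A_\rho^G$.

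The main obstacle is upgrading local admissibility to $\phi \in \sym_\rho(G)$: bijectivity of $\phi$, together with the $\rho$-bound on $\phi^{-1}$'s step. For each $g$, admissibility supplies a witness $\psi_g \in \sym_\rho(G)$ that realises the pattern at $g$; since $\cE$ is injective, $\phi$ must coincide with a suitable translate of $\psi_g$ on $g\ball{e}{R}$, so $\phi$ is injective on every such ball and a candidate local inverse of $\rho$-bounded step is defined on $\phi(g)\ball{e}{\lfloor R/\rho\rfloor}$. I would then patch these local inverses into a global $\phi^{-1}: G \to G$ by (a) propagating surjectivity from the basepoint $\phi(e)=e$ outward, using the local inclusion $\phi(g\ball{e}{R}) \supseteq \phi(g)\ball{e}{\lfloor R/\rho\rfloor}$ provided by each $\psi_g$, to show the inverse domains cover $G$; and (b) invoking the uniqueness clause of Lemma \ref{L:inverse} on overlapping inverse domains to force local inverses to agree. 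Gluing then yields a well-defined $\phi^{-1}$, automatically with $\rho$-bounded step, and hence $\phi \in \sym_\rho(G)$. The delicate point is calibrating $R$ so that the inverse-domain overlaps cover $G$ consistently and so that the compatibility-by-uniqueness propagates through the whole Cayley graph; it is exactly the tree structure of $G$ that makes this gluing unobstructed, in line with the remark after Theorem \ref{thm:SFT} that the analogous statement fails for $\ZZ^d$.
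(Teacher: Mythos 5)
Your overall plan is the same as the paper's: cut $\cE(\sym_\rho(G))$ out of $\A_\rho^G$ by a finite list of patterns on a fixed window, take the easy inclusion for granted, and for the reverse inclusion recover $\phi$ from a locally admissible $x$ via the path formula and then prove $\phi\in\sym_\rho(G)$. Your set $P$ of allowed patterns (restrictions of $\cE(\phi)$, $\phi\in\sym_\rho(G)$, to $\ball{e}{R}$) is equivalent to the paper's Axioms 1 and 2 once $R\ge\rho^2+1$, and the surjectivity-propagation step and the closedness/compactness preliminaries are fine (though closedness is not actually needed --- it follows once the SFT characterization is established).

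The genuine gap is in the bijectivity argument, specifically the step where you ``patch local inverses'' and ``invoke the uniqueness clause of Lemma~\ref{L:inverse} on overlapping inverse domains.'' The local inverse $\chi_g$ built from the witness $\psi_g$ is defined on a ball around $\phi(g)$, and you need $\chi_{g_1}=\chi_{g_2}$ whenever their domains overlap. But those domains overlap whenever $\phi(g_1)$ and $\phi(g_2)$ are close, and without already knowing $\phi$ is injective with boundedly stepped inverse, $g_1$ and $g_2$ can be arbitrarily far apart (indeed, if $\phi(g_1)=\phi(g_2)$ with $g_1\ne g_2$, the two local inverses disagree at that point). So ``local inverses agree on overlaps'' is essentially the injectivity you are trying to prove: the argument is circular. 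Lemma~\ref{L:inverse}'s uniqueness clause does not rescue this, since it gives uniqueness of the reduced word for $\phi^{-1}(g)^{-1}\phi^{-1}(gt)$ for a given bijection $\phi\in\sym_e(G)$ --- it says nothing about two \emph{different} candidate preimages produced by two different witnesses.

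What the paper does instead, and what you would need, is to build $\psi$ \emph{inductively along geodesics from the identity}: set $\psi(e)=e$ and, for $|ht|_G=|h|_G+1$, define $\psi(ht)=\psi(h)s_1\cdots s_n$ where $s_1,\ldots,s_n$ is the unique short sequence with $t=x_{\psi(h)}(s_1)\cdots x_{\psi(h)s_1\cdots s_{n-1}}(s_n)$ (uniqueness is a genuine local fact about $x$, furnished by the window lying in $P$, since it is the uniqueness clause applied to the \emph{witness} $\psi_{\psi(h)}$, a bona fide bijection). This definition is a priori well defined because the Cayley graph of $G$ is a tree. One must then prove the recursion also holds when $|ht|_G=|h|_G-1$ (the analogue of Claim~3 in the paper) and finally that $\psi\circ\phi=\mathrm{id}$ by induction on $|g|_G$ (Claim~4). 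These two claims are the real technical content; they are precisely what your phrase ``the compatibility-by-uniqueness propagates through the whole Cayley graph'' glosses over.
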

 
 \begin{proof}
Let $F =  \ball{e}{\rho^2 + 1}\subset G$ be the radius $\rho^2+1$ ball. Let $\cV \subset \A_\rho^F$ be the set of all maps $z:F \to \A_\rho$ such that
\begin{enumerate}
\item[(Axiom 1)] $z_{e}(s)z_{s}(s^{-1})=e$ for all $s\in S \cup S^{-1}$.
\item[(Axiom 2)] For every $h\in G$ with $|h|_G \le \rho$ there exists a unique sequence $s_1,\ldots, s_n \in S\cup S^{-1}$ with $|s_1\cdots s_n|_G = n \le \rho^2+1$ such that
$$h=z_{e}(s_1)z_{s_1}(s_2)z_{s_1s_2}(s_3)\cdots z_{s_1\cdots s_{n-1}}(s_n).$$
Moreover, $n\le \rho|h|_G$.
%\item If $s_1,\ldots, s_n \in S\cup S^{-1}$, $1\le n \le \rho^2+1$ and
%$$e = z_{e}(s_1)z_{s_1^{-1}}(s_2)z_{(s_1s_2)^{-1}}(s_3)\cdots z_{(s_1\cdots s_{n-1})^{-1}}(s_n)$$
%then $s_1\cdots s_n=e$. 
\end{enumerate}
Let $\cW = \A_\rho^F \setminus \cV$ be the complement of $\cV$. Let $Z$ be the subshift of finite type determined by $\cW$. This means $Z$ is the set of all $x\in \A^G$ such that for every $g\in G$,  $gx$ restricted to $F$ is in $\cV$. We will show $Z =  \cE(\sym_\rho(G))$.

Let $x\in Z$. We claim that
\begin{eqnarray}\label{E:reverse}
x_g(s)x_{gs}(s^{-1})=e
\end{eqnarray}
for all $s\in S\cup S^{-1}$ and $g\in G$. By Axiom 1 applied to $z=g^{-1}x$, 
$$(g^{-1}x)_{e}(s)(g^{-1}x)_{s}(s^{-1})=e.$$
Equation (\ref{E:reverse}) follows from this and the fact that $(g^{-1}x)_{e}(s) = x_g(s)$, $(g^{-1}x)_{s}(s^{-1})=x_{gs}(s^{-1})$ by definition of the $G$-action on $\A^G$. 

\noindent {\bf Claim 1}. $Z \supset \cE(\sym_\rho(G))$.

\begin{proof}[Proof of Claim 1]

Let $\phi \in \sym_\rho(G)$ and suppose $x=\cE(\phi) \in \A_\rho^G$. We will show $x\in Z$. If $s\in S \cup S^{-1}$ then
$$x_{e}(s) x_s(s^{-1}) =\phi(e)^{-1} \phi(s)\phi(s)^{-1}\phi(s^{-1}s) = e.$$
This verifies Axiom 1. 

Now let $h \in G$ with $|h|_G \le \rho$.  Let $h=h_1\cdots h_k$ for $h_i \in S \cup S^{-1}$ and $|h|_G=k$. Then
$$\phi^{-1}(h)  = \big(\phi^{-1}(e)^{-1}\phi^{-1}(h_1)\big)\big(\phi^{-1}(h_1)^{-1}\phi^{-1}(h_1h_2)\big) \cdots \big(\phi^{-1}(h_1\cdots h_{k-1})^{-1}\phi^{-1}(h_1h_2\cdots h_k)\big).$$
Since each term in parenthesis has word length bounded by $\rho$, this shows $|\phi^{-1}(h)|_G \le |h|_G \rho\le \rho^2$.

By Lemma \ref{L:inverse} applied to $g=e$, $t=h$, $\phi^{-1}(h)=s_1\cdots s_n$ where $s_1,\ldots, s_n \in S \cup S^{-1}$ is uniquely determined by $|s_1\cdots s_n|_G = n$ and
$$h=x_{e}(s_1)\cdots x_{s_1\cdots s_{n-1}}(s_n).$$
This verifies Axiom 2. Therefore, $x\in Z$. Since $\phi$ is arbitrary, this proves $Z \supset \cE(\sym_\rho(G))$.
\end{proof}

\begin{lem}
$Z \subset \cE(\sym_\rho(G))$.
\end{lem}
\begin{proof}

Fix $x\in Z$.  We will define some map $\phi: G \to G$ and show 
\begin{enumerate}
    \item $\phi \in \sym_\rho(G)$.
    \item $\cE(\phi) = x$. 
\end{enumerate}
Define $\phi:G \to G$ as follows. First let $\phi(e)=e$. Assuming $\phi(g)$ has been defined for some $g\in G$, define $\phi(gs)$ (for $s\in S\cup S^{-1}$ with $|gs| = |g|+1$) by
\begin{eqnarray}\label{E:increment}
\phi(gs) = \phi(g)x_g(s).
\end{eqnarray}
This uniquely defines $\phi$. It will be convenient to know that (\ref{E:increment}) holds even without the assumption $|gs|=|g|+1$. So suppose $|gs|=|g|-1$. By (\ref{E:increment}) applied to $gs$ and $s^{-1}$ (note that $|gss^{-1}| = |g| = |gs| + 1$),
\begin{eqnarray}\label{E:increment2}
\phi(g)= \phi(gss^{-1}) = \phi(gs)x_{gs}(s^{-1}).
\end{eqnarray}
%Thus $\phi(gs) = \phi(g) x_{gs}(s^{-1})^{-1}$. 
By Axiom 1 applied to $z= (gs)^{-1}x$ and $s^{-1}$,
$$e=z_e(s^{-1})z_{s^{-1}}(s) = x_{gs}(s^{-1})x_g(s).$$
Thus $x_{gs}(s^{-1}) = x_g(s)^{-1}$. Substitute this into (\ref{E:increment2}) to obtain $\phi(gs) = \phi(g) x_g(s)$ as claimed. Thus assuming $\phi \in \sym_\rho(G)$ we can calculate that for any $h\in G$, $s \in S\cup S^{-1}$,  $\cE(\phi)_h(s) = \phi(h)^{-1}\phi(hs) = x_h(s)$, so $\cE(\phi) = x$.  So it remains to show that $\phi \in \sym_\rho(G)$.  

Also, by induction, (\ref{E:inverse}) holds for the $\phi$ that we have constructed.

 %by equation (\ref{E:inverse}) and the requirement that $\phi(e)=e$. This is well-defined by (\ref{E:reverse}). 
 
 Our next goal is to prove that $\phi$ is surjective and injective, hence in $\sym_e(G)$, and then we will show $\phi \in \sym_\rho(G)$ and therefore $\cE(\sym_\rho(G))=Z$, which finishes the proof.

Note that $\phi(s) =x_{e}(s) \in \ball{e}{\rho}$ for $s\in S \cup S^{-1}$. So $|\phi(s)|_G \le \rho$ for all $s\in S \cup S^{-1}$. Moreover, $\phi(g)^{-1}\phi(gs)=x_g(s)$. So $| \phi(g)^{-1}\phi(gs)|_G \le \rho$ for all $g$.

%To show $Z \subset \cE(\sym_\rho(G))$, it suffices to prove $\phi$ is bijective.

\noindent {\bf Claim 2}. $\phi$ is surjective.

\begin{proof}[Proof of Claim 2]

By induction, it suffices to prove the following statement: for every $g \in G$ and $t\in S\cup S^{-1}$, if $g$ is in the image of $\phi$ then $gt$ is also in the image of $\phi$. So suppose $\phi(h)=g$ for some $h$. We apply Axiom 2 to $h^{-1}x$ to obtain the existence of $s_1,\ldots, s_n \in S\cup S^{-1}$ with $n \le \rho$ such that 
$$t=(h^{-1}x)_{e}(s_1)(h^{-1}x)_{s_1}(s_2)(h^{-1}x)_{s_1s_2}(s_3)\cdots (h^{-1}x)_{s_1\cdots s_{n-1}}(s_n).$$
Using the action of $G$ of $\A^G$, this implies
$$t=x_{h}(s_1)x_{hs_1}(s_2)x_{hs_1s_2}(s_3)\cdots x_{hs_1\cdots s_{n-1}}(s_n).$$
By (\ref{E:inverse}),
$$\phi(hs_1\cdots s_n) = \phi(h)x_{h}(s_1)x_{hs_1}(s_2)x_{hs_1s_2}(s_3)\cdots x_{hs_1\cdots s_{n-1}}(s_n)=gt.$$
%definition, $x_h(s_1\cdots s_n) = \phi(h)^{-1}\phi(hs_1\cdots s_n)$. So equation  (\ref{E:inverse}) implies
%$$\phi(hs_1\cdots s_n) = \phi(h) x_h(s_1\cdots s_n) = \phi(h)x_{h}(s_1)x_{hs_1}(s_2)x_{hs_1s_2}(s_3)\cdots x_{hs_1\cdots s_{n-1}}(s_n)=gt.$$
This proves $\phi$ is surjective.
\end{proof}

We will now show $\phi$ is injective by demonstrating that it has an inverse $\psi$. Define $\psi:G \to G$ by
\begin{enumerate}
\item $\psi(e)=e$,
\item if $\psi(g)$ has been defined, $t \in S\cup S^{-1}$ and $|gt|_G = |g|_G + 1$ then define
\begin{eqnarray}\label{E:thing}
\psi(gt)=\psi(g)s_1\cdots s_n
\end{eqnarray}
where $s_1,\ldots, s_n \in S \cup S^{-1}$ is the unique sequence satisfying $n=|s_1\cdots s_n| \le \rho$ and 
$$t=(\psi(g)^{-1}x)_{e}(s_1)\cdots (\psi(g)^{-1}x)_{s_1\cdots s_{n-1}}(s_n) = x_{\psi(g)}(s_1)\cdots x_{\psi(g)s_1\cdots s_{n-1}}(s_n).$$
The existence and uniqueness of this sequence is guaranteed by Axiom 2 of the definition of $Z$.
\end{enumerate}

\noindent {\bf Claim 3}. Equation (\ref{E:thing}) holds for all $t\in S \cup S^{-1}$ (even if $|gt|_G\ne |g|_G+1$). 

\begin{proof}[Proof of Claim 3]
To see this, suppose $|gt| = |g|-1$ and let $h=gt$.  Then $|g| = |gt|+1$ or equivalently $|ht^{-1}| = |h|+1$ so by definition
$$\psi(g)=\psi(ht^{-1}) = \psi(h)t_1\cdots t_n$$
where $t_1,\ldots, t_n \in S \cup S^{-1}$ is the unique sequence satisfying 
\begin{eqnarray}\label{E:thing-1}
t^{-1}=x_{\psi(h)}(t_1)\cdots x_{\psi(h)t_1\cdots t_{n-1}}(t_n)
\end{eqnarray}
and $|t_1\cdots t_n|_G = n\le \rho$. Thus
\begin{eqnarray}\label{E:thing-2}
\psi(gt)=\psi(h) = \psi(g)t_n^{-1}\cdots t_1^{-1}.
\end{eqnarray}
By (\ref{E:thing-1}), (\ref{E:reverse}) and (\ref{E:thing-2})
\begin{eqnarray*}
t &=& x_{\psi(h)t_1\cdots t_{n-1}}(t_n)^{-1}\cdots x_{\psi(h)}(t_1)^{-1} \\
&=& x_{\psi(h)t_1\cdots t_n}(t_n^{-1})\cdots x_{\psi(h)t_1}(t_1^{-1}) \\
&=& x_{\psi(g)}(t_n^{-1})\cdots x_{\psi(g)t_n^{-1}\cdots t_{2}^{-1}}(t_1^{-1}).
\end{eqnarray*}
This proves the claim with $(s_1,\ldots, s_n) = (t_n^{-1},\ldots, t_1^{-1})$. 
\end{proof}

Let $g \in G$ and $t \in G$ with $|t|_G\le \rho$. By Claim 3 and induction on $|t|_G$, we obtain
\begin{eqnarray}\label{E:thing124}
\psi(gt)=\psi(g)s_1\cdots s_n
\end{eqnarray}
where $s_1,\ldots, s_n \in S \cup S^{-1}$ is the unique sequence satisfying $n=|s_1\cdots s_n| \le \rho^2$ and 
$$t=(\psi(g)^{-1}x)_{e}(s_1)\cdots (\psi(g)^{-1}x)_{s_1\cdots s_{n-1}}(s_n) = x_{\psi(g)}(s_1)\cdots x_{\psi(g)s_1\cdots s_{n-1}}(s_n).$$
%By repeatedly applying Claim 3, we see that this is true by induction on $|t|_G$. 

\noindent {\bf Claim 4}. $\phi$ is injective.

\begin{proof}[Proof of Claim 4]
It suffices to prove $\psi (\phi(g))=g$ for all $g\in G$. This is true for $g=e$. By induction, it suffices to assume $\psi(\phi(g))=g$ and prove $\psi(\phi(gs))=gs$ for $s\in S \cup S^{-1}$. 

Let 
$$t=x_g(s) = \phi(g)^{-1}\phi(gs)$$ 
where the second equality holds by (\ref{E:inverse}) or (\ref{E:increment}). Since $x \in \A_\rho^G$, this implies 
$|t|_G \le \rho$.  By (\ref{E:thing124}), we have
 $$\psi(\phi(gs)) = \psi(\phi(g)t) = \psi(\phi(g))s_1\cdots s_n = gs_1\cdots s_n $$
where $s_1,\ldots, s_n \in S \cup S^{-1}$ is the unique sequence satisfying
$$t=x_{g}(s_1)\cdots x_{gs_1\cdots s_{n-1}}(s_n)$$
and $|s_1\cdots s_n| = n\le \rho^2$.  But we also have $t = x_g(s)$ so by uniqueness, $s = s_1\cdots s_n$ and $n =1$. So
$$\psi(\phi(gs)) = \psi(\phi(g)t) = gs_1\cdots s_n = gs.$$
as required.
\end{proof}

Recall from the definition of $\psi$ that for any $s\in S \cup S^{-1}$, $g \in G$,
$$\psi(gs) = \psi(g)s_1\cdots s_n$$
where $s_1,\ldots, s_n \in S \cup S^{-1}$ is the unique sequence satisfying $n=|s_1\cdots s_n| \le \rho$ and 
$$s=(\psi(g)^{-1}x)_{e}(s_1)\cdots (\psi(g)^{-1}x)_{s_1\cdots s_{n-1}}(s_n) = x_{\psi(g)}(s_1)\cdots x_{\psi(g)s_1\cdots s_{n-1}}(s_n).$$
In particular, since $\psi = \phi^{-1}$ this shows $|\phi^{-1}(s)|_G \le \rho$ for all $s \in S \cup S^{-1}$. Similarly, $|\phi^{-1}(g)^{-1}\phi^{-1}(gs)|_G \le \rho$ for any $g\in G$. Thus $\phi \in \sym_\rho(G)$.

Because $\cE(\phi)=x$ and $x\in Z$ is arbitrary, we must have $Z \subset \cE(\sym_\rho(G))$. 

\end{proof}
Since we have already shown the opposite inclusion, the two sets are equal. This proves the theorem.

 \end{proof}

 \subsection{Rearranging periodic orbits}\label{sec:finitemodels}
 
 Define $\cF:\sym_e(G) \to \A^G$ by 
$$\cF(\phi)(h)(s) =  \mho^{h^{-1}}\phi(s)=h^{-1}\phi(\phi^{-1}(h)s).$$
Then $\cF$ is an embedding and satisfies the equivariance condition 
\begin{eqnarray}\label{E:equi2}
\cF(\mho^h \phi) = h\cF(\phi)
\end{eqnarray}
for $h \in G$, $\phi \in \sym_e(G)$. 

The next lemma enables us to re-arrange the orbit of a periodic point in $\A_\rho^G$. If the original periodic point is approximately equidistributed with respect to an invariant measure $\zeta$ on $\A_\rho^G$ then the new periodic point will be approximately equidistributed with respect to $(\cF\circ \cE^{-1})_*\zeta$. This will help us map periodic points which witness the entropy of $(X,\mu,T,G)$ to periodic points witnessing the entropy of $(X,\mu,U,G)$.  

%For $x\in \A_\rho^G$, let
%$$\Stab(x) = \{g\in G:~gx=x\}.$$
%We say $x$ has {\bf period $n$} if its stabilizer has finite index equal to $n=[G:\Stab(x)]$. 

%The proof of the next lemma is straightforward.

%\begin{lem}
%If $\phi \in \sym_e(G)$, $x=\cE(\phi)$ and $y=\cF(\phi)$ then
%$$y_{\phi(h)}(s) = x_{h}(s)$$
%for all $h \in G$ and $s\in S$. In particular, $\cF \circ \cE^{-1}:Z \to \A^G$ is continuous. Moreover, if $x\in Z$ is periodic with period $n$ then $\cF(\cE^{-1}(x))$ is also periodic with period $n$. 
%\end{lem}

%\begin{proof}
%The first claim is straightforward. By (\ref{E:inverse}), $\cE^{-1}$ is continuous. Because $\cE$, viewed as a map from 
%\end{proof}

\begin{lem}\label{L:tau}
Let $\s: G \to \sym(n)$ be a homomorphism and $x \in \A^n$. Recall the pullback name $x^\s_v$ from \S \ref{S:symbolic}. Suppose $x^\s_v \in Z=\cE(\sym_\rho(G))$ for all $v \in [n]$ (and some $\rho>0$). For $v \in [n]$, let  $\phi_v = \cE^{-1}(x^\s_v) \in \sym_e(G)$. Then there exists a unique homomorphism $\tau:G \to \sym(n)$  satisfying
$$\tau(g)v = \s(\phi_v^{-1}(g^{-1})^{-1})v$$
for all $g\in G$ and $v\in [n]$. Moreover, 
$$\cF(\cE^{-1}(x^\s_v)) = x^\tau_v$$
for all $v\in [n]$.
\end{lem}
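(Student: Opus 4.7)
The plan is to unwind the definitions of $\cE$, $\cF$, $\Theta$, and $\mho$ and exploit the equivariance $\cE(\Theta^h\phi)=h\cE(\phi)$ from (\ref{E:equi1}). Uniqueness of $\tau$ is immediate since the value $\tau(g)v$ is prescribed for every $g$ and $v$, so the content is to show that this prescription gives a bijection of $[n]$ for each $g$ and that $g\mapsto\tau(g)$ is a homomorphism. The starting point is the identity
\[
\phi_{\sigma(h)v} \;=\; \Theta^h \phi_v, \qquad h\in G,\ v\in[n],
\]
which follows from the pullback-name equivariance $x^\sigma_{\sigma(h)v} = h\cdot x^\sigma_v$ (established in the proof of Claim \ref{C : restricted_invar}) combined with the equivariance and injectivity of $\cE$. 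Specializing to $h = \phi_v^{-1}(g^{-1})^{-1}$ and using $\mho^g\phi_v = \Theta^{\phi_v^{-1}(g^{-1})^{-1}}\phi_v$ gives the motivating consequence $\phi_{\tau(g)v} = \mho^g\phi_v$.

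For the homomorphism property $\tau(gh) = \tau(g)\tau(h)$, I would compute directly. Set $u := \phi_v^{-1}(h^{-1})^{-1}$ and $w := \tau(h)v = \sigma(u)v$, so $\phi_w = \Theta^u\phi_v$. A short calculation from the definition of $\Theta$ yields
\[
\phi_w^{-1}(g^{-1})^{-1} \;=\; \phi_v^{-1}(h^{-1}g^{-1})^{-1}\,\phi_v^{-1}(h^{-1}).
\]
Substituting this into $\tau(g)w = \sigma(\phi_w^{-1}(g^{-1})^{-1})w$ and using that $\sigma$ is a homomorphism, the factor $\sigma(\phi_v^{-1}(h^{-1}))\sigma(u)$ collapses to the identity, leaving $\tau(g)\tau(h)v = \sigma(\phi_v^{-1}((gh)^{-1})^{-1})v = \tau(gh)v$. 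Combined with the trivial identity $\tau(e) = \mathrm{id}_{[n]}$, this shows each $\tau(g)$ is invertible with inverse $\tau(g^{-1})$, so $\tau$ is a homomorphism into $\sym(n)$.

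For the second statement $\cF(\phi_v) = x^\tau_v$ in $\A^G = (G^{S\cup S^{-1}})^G$, both sides are checked entrywise. The left side is $\cF(\phi_v)(h)(s) = h^{-1}\phi_v(\phi_v^{-1}(h)s)$ by definition of $\cF$. For the right side, the general identity $x_w(s) = \phi_w(s)$ (obtained by evaluating $\cE(\phi_w)(e)(s)$) together with $\tau(h)^{-1} = \tau(h^{-1})$ give $x^\tau_v(h)(s) = \phi_{\tau(h^{-1})v}(s)$; since $\tau(h^{-1})v = \sigma(\phi_v^{-1}(h)^{-1})v$, the first displayed identity gives $\phi_{\tau(h^{-1})v} = \Theta^{\phi_v^{-1}(h)^{-1}}\phi_v$, which evaluated at $s$ unwinds to $h^{-1}\phi_v(\phi_v^{-1}(h)s)$, matching the left side. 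The main obstacle is the homomorphism check, where the abstract $\mho$-action on $\sym_e(G)$ must be transported through the (possibly non-injective) map $v\mapsto\phi_v$ to the permutation action of $\sigma$ on $[n]$; what makes this work is that $\sigma$ is a homomorphism, so compositions inside $\sigma$ collapse to multiplications in $G$.
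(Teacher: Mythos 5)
Your proof is correct. The key difference from the paper's argument is that you first isolate the clean equivariance identity $\phi_{\sigma(h)v}=\Theta^{h}\phi_v$ (from $x^\sigma_{\sigma(h)v}=h\cdot x^\sigma_v$ together with the equivariance and injectivity of $\cE$), and then carry out all the remaining work by purely algebraic manipulations with the $\Theta$- and $\mho$-actions on $\sym_e(G)$. In particular, your displayed identity $\phi_w^{-1}(g^{-1})^{-1}=\phi_v^{-1}(h^{-1}g^{-1})^{-1}\phi_v^{-1}(h^{-1})$ is verified directly from the definition of $\Theta$ applied to $\phi_w=\Theta^u\phi_v$. The paper instead establishes the same key identity (their equation (\ref{E:1}), after relabeling $g\mapsto h^{-1}$, $t\mapsto g^{-1}$) by expanding $\phi_w^{-1}(t)$ as a word $t_1\cdots t_m$ in the generators, applying $\phi_w$, invoking the coordinate formula (\ref{E:inverse}) and then Lemma \ref{L:inverse}, tracking how pullback names transform vertex by vertex. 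The underlying structure is the same, but your approach keeps the computation at the level of the abstract group $\sym_e(G)$ rather than descending to generator-by-generator coordinates; this is a bit shorter and avoids a separate appeal to Lemma \ref{L:inverse}, at the cost of requiring one explicit calculation with the definition of $\Theta$. Your verification of $\cF(\phi_v)=x^\tau_v$ likewise routes through the same equivariance identity (with $k=\phi_v^{-1}(h)^{-1}$), whereas the paper verifies it directly from the definitions of $\cE$, $\cF$, and $\tau$; both are valid and of comparable length. One small point worth making explicit in a write-up: the equivariance $\phi_{\sigma(h)v}=\Theta^h\phi_v$ makes sense because $\Theta^h\phi_v$ is automatically in $\sym_e(G)$ (it is just the $\Theta$-action), and its $\cE$-image equals $x^\sigma_{\sigma(h)v}\in Z$, which is how one knows $\cE^{-1}$ may be applied to both sides.
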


\begin{proof}
To prove that $\tau$ is a homomorphism, it suffices to prove that for any $t, g \in G$, $\tau(t^{-1})\tau(g^{-1})v=\tau(t^{-1}g^{-1})v$. This is implied by
\begin{eqnarray}\label{E:1}
\phi^{-1}_{w}(t)^{-1} \phi_v^{-1}(g)^{-1} = \phi_v^{-1}(gt)^{-1}.
\end{eqnarray}
where $w = \s(\phi^{-1}_v(g)^{-1})v$. Choose $t_1,\ldots, t_m \in S \cup S^{-1}$ so that
$$ \phi^{-1}_{w}(t)=t_1\cdots t_m.$$
Apply $ \phi_{w}$ to both sides, then (\ref{E:inverse}) to obtain
\begin{eqnarray*}
t =  \phi_{w}(t_1\cdots t_m) = (x^\s_{w})_e(t_1)(x^\s_{w})_{t_1}(t_2)\cdots (x^\s_{w})_{t_1\cdots t_{m-1}}(t_m).
\end{eqnarray*}
For any $1\le i \le m$ we follow the definitions of $x^\s_w$, $w$ and $x^\s_v$ to obtain
\begin{eqnarray*}
(x^\s_w)_{t_1\cdots t_{i-1}}(t_i) &=& x(\s(t_{i-1}^{-1}\cdots t_1^{-1})w)(t_i) = x(\s(t_{i-1}^{-1}\cdots t_1^{-1}\phi^{-1}_v(g)^{-1})v)(t_i)\\
& =& (x^\s_v)_{\phi^{-1}_v(g)t_1\cdots t_{i-1}}(t_i).
\end{eqnarray*}
Combined with the previous formula, this gives
$$t =  (x^\s_v)_{\phi^{-1}_v(g)}(t_1)(x^\s_v)_{\phi^{-1}_v(g)t_1}(t_2)\cdots (x^\s_v)_{\phi^{-1}_v(g)t_1\cdots t_{m-1}}(t_m).$$
Now we apply Lemma \ref{L:inverse} with $\phi_v$ in place of $\phi$ and $x^\s_v$ in place of $x$ to obtain
$$ \phi_v^{-1}(gt) = \phi_v^{-1}(g)t_1\cdots t_m = \phi_v^{-1}(g) \phi^{-1}_{w}(t).$$
Take inverses to obtain (\ref{E:1}). This shows that $\tau$ is a homomorphism.

To prove the last statement, we check
$$\cF(\phi_v)_g(s) = g^{-1}\phi_v(\phi_v^{-1}(g)s) $$
by definition of $\cF$. Since $\cE(\phi_v)=x^\s_v$, the definition of $\cE$ gives
$$(x^\s_v)_{\phi_v^{-1}(g)}(s) = g^{-1}\phi_v(\phi_v^{-1}(g)s).$$
So $\cF(\phi_v)_g(s) = (x^\s_v)_{\phi_v^{-1}(g)}(s)$. By definition of $x^\s_v$, $\tau$ and $x^\tau_v$, we have
$$(x^\s_v)_{\phi_v^{-1}(g)}(s) = x(\s(\phi_v^{-1}(g)^{-1})v)(s)= x(\tau(g^{-1})v)(s) = (x^\tau_v)_g(s).$$
Since $g$ and $s$ are arbitrary, $\cF(\phi_v) = x^\tau_v$. Because $\phi_v = \cE^{-1}(x^\s_v)$, this implies the last claim.

%By definition, $\phi_v = \cE^{-1}(x^\s_v)$. So the last statement follows from:
%\begin{eqnarray*}
%\cF(\phi_v)_g(s) &=& g^{-1}\phi_v(\phi_v^{-1}(g)s) = g^{-1}g (x^\s_v)_{\phi_v^{-1}(g)}(s) \\
%&=& x(\s(\phi_v^{-1}(g)^{-1})v)(s)= x(\tau(g^{-1})v)(s) = (x^\tau_v)_g(s). 
%\end{eqnarray*}
\end{proof}

Next we extend $\cE$ and $\cF$ as follows. Define $\tcE, \tcF:\sym_e(G) \times \B^G \to \A^G \times \B^G$ by
$$\tcE(\phi,y)=(\cE(\phi),y), \quad \tcF(\phi,y) = (\cF(\phi), y \circ \phi^{-1}).$$
%By the equivariance properties of $\cE, \cF$ (\ref{E:equi1}), (\ref{E:equi2}) and the definitions of $\tTheta, \tilde{\mho}$ (\ref{E:defn}), 
We claim these maps have the following equivariance properties:
\begin{eqnarray}\label{E:tcE}
\tcE\circ \tTheta^h = h \tcE, \quad \tcF \circ \tilde{\mho}^h = h \tcF.
\end{eqnarray}
The first equality above is straightforward and left to the reader (use (\ref{E:equi1}), (\ref{E:defn})). Verifying the second equality above is also straightforward but a little long. We will use the notation $y \circ h$ for the function which takes $g \in G$ to $y(hg)$. For any $(\phi,y) \in \A^G \times \B^G$, the definition of $\tilde{\mho}$ and $\tcF$ give
\begin{eqnarray*}
\tcF \circ \tilde{\mho}^h(\phi,y) &=& \tcF(\mho^h\phi, \phi^{-1}(h^{-1})^{-1}y) = \tcF(\mho^h\phi, y \circ \phi^{-1}(h^{-1})) \\
&=& (\cF\mho^h\phi, y \circ \phi^{-1}(h^{-1}) \circ (\mho^h \phi)^{-1}).
\end{eqnarray*}
On the other hand,
 \begin{eqnarray*}
 h \tcF(\phi,y) &=& h(\cF \phi, y \circ \phi^{-1}) = (h\cF \phi, y \circ \phi^{-1}\circ h^{-1}).
\end{eqnarray*}
By (\ref{E:equi2}), the first coordinates are equal. So it now suffices to show 
$$y \circ \phi^{-1}(h^{-1}) \circ (\mho^h \phi)^{-1} = y \circ \phi^{-1}\circ h^{-1}.$$
Equivalently, for all $g\in G$,
$$y (\phi^{-1}(h^{-1}) (\mho^h \phi)^{-1}(g)) = y(\phi^{-1}(h^{-1}g)).$$
Removing the $y$'s, it suffices to show
$$\phi^{-1}(h^{-1}) (\mho^h \phi)^{-1}(g) = \phi^{-1}(h^{-1}g).$$
Multiply both sides by $\phi^{-1}(h^{-1})^{-1}$, to obtain the equivalent 
$$(\mho^h \phi)^{-1}(g) = \phi^{-1}(h^{-1})^{-1}\phi^{-1}(h^{-1}g).$$
Now apply $\mho^h \phi$ to both sides to obtain
$$g = (\mho^h \phi)(  \phi^{-1}(h^{-1})^{-1}\phi^{-1}(h^{-1}g)).$$
This is a straightforward consequence of the definition of $\mho^h\phi$ and so finishes our verification of (\ref{E:tcE}).

\begin{lem}\label{L:tau2}
Keep notation as in Lemma \ref{L:tau}. Also let $y \in \B^n$. Then 
$$\tcF(\tcE^{-1}(x^\s_v, y^\s_v)) = (x^\tau_v, y^\tau_v).$$
\end{lem}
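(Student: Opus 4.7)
The plan is to unpack the definitions of $\tcE$, $\tcF$, and the pullback names, then verify each coordinate separately. The first coordinate will reduce immediately to Lemma~\ref{L:tau}, so essentially all of the work lies in the second coordinate, where the key move is to recognize that the definition of $\tau$ is precisely what is needed to rewrite $y^\s_v \circ \phi_v^{-1}$ as $y^\tau_v$.

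First, I would compute $\tcE^{-1}(x^\s_v, y^\s_v)$. Since $\tcE$ only touches the first coordinate (applying $\cE$), its inverse also only touches the first coordinate: $\tcE^{-1}(x^\s_v, y^\s_v) = (\phi_v, y^\s_v)$, where $\phi_v = \cE^{-1}(x^\s_v)$ as in Lemma~\ref{L:tau}. Then by definition of $\tcF$,
\[ \tcF(\phi_v, y^\s_v) = (\cF(\phi_v),\ y^\s_v \circ \phi_v^{-1}). \]
The first coordinate is $\cF(\phi_v) = x^\tau_v$ by the last sentence of Lemma~\ref{L:tau}, so that part is immediate.

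For the second coordinate, I would evaluate $(y^\s_v \circ \phi_v^{-1})$ at a generic $g \in G$. By definition of the pullback name,
\[ (y^\s_v \circ \phi_v^{-1})(g) = y^\s_v(\phi_v^{-1}(g)) = y\bigl(\s(\phi_v^{-1}(g)^{-1})\, v\bigr). \]
The definition of $\tau$ from Lemma~\ref{L:tau} says precisely that $\tau(g^{-1})v = \s(\phi_v^{-1}(g)^{-1})v$, so the above equals $y(\tau(g^{-1})v) = y^\tau_v(g)$. Since $g$ was arbitrary, $y^\s_v \circ \phi_v^{-1} = y^\tau_v$, which completes the verification of both coordinates.

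There is no real obstacle here; the lemma is essentially a bookkeeping statement that packages Lemma~\ref{L:tau} together with the $\B^G$-coordinate. The only thing one has to be careful about is keeping the inverses and the direction of composition straight in the formula $\tcF(\phi,y) = (\cF(\phi), y \circ \phi^{-1})$ versus the pullback convention $y^\s_v(g) = y(\s(g^{-1})v)$, but once these are matched up, the identity $\tau(g^{-1})v = \s(\phi_v^{-1}(g)^{-1})v$ does all the work.
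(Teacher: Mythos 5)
Your proof is correct and follows essentially the same route as the paper: reduce the first coordinate to Lemma~\ref{L:tau}, then check the second coordinate pointwise and invoke the defining identity $\tau(g^{-1})v = \s(\phi_v^{-1}(g)^{-1})v$. Nothing to add.
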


\begin{proof}
Recall the notation $\phi_v = \cE^{-1}(x^\s_v) \in \sym_e(G)$. By definition of $\tcE$,
$$\tcE^{-1}(x^\s_v, y^\s_v) = (\phi_v, y^\s_v).$$
By definition of $\tcF$ and Lemma \ref{L:tau},
$$\tcF(\tcE^{-1}(x^\s_v, y^\s_v)) = (x^\tau_v, y^\s_v \circ \phi_v^{-1}).$$
So it suffices to show $y^\tau_v=y^\s_v \circ \phi_v^{-1}$. For any $g \in G$,
$$y^\tau_v(g) = y(\tau(g^{-1})v), \quad y^\s_v \circ \phi_v^{-1}(g) = y(\s(\phi_v^{-1}(g)^{-1})v).$$
The definition of $\tau$ gives $\tau(g^{-1})v = \s(\phi_v^{-1}(g)^{-1})v$. This finishes the proof.
\end{proof}

 \subsection{Proof of Theorem \ref{thm:main}}
 
%\subsection{Bounded orbit equivalence}
 
% \begin{thm}
% If Conjecture \ref{conj:SFT} is true then the $f$-invariant is invariant under bounded orbit-equivalence. To be precise, assume the standard hypotheses  from \S \ref{S:prelim}. Also suppose there is a $\rho \in \NN$ such that
 %$$|\a(s,x)|_G \le \rho \textrm{ and } |\beta(s,x)|_G \le \rho$$
 %for every $s\in S\cup S^{-1}$ and a.e. $x\in X$. Also assume there exist finite generating partitions for the both actions $G \cc^T (X,\mu)$ and $G \cc^U (X,\mu)$. Then $f_\mu(T) = f_\mu(U)$.
 %\end{thm}
 
 \begin{proof}[Proof of Theorem \ref{thm:main}]
 Without loss of generality, we assume the standard hypotheses  from \S \ref{S:prelim}. Because the actions $T$ and $U$ are boundedly orbit-equivalent, there is a $\rho \in \NN$ such that for $\mu$-a.e. $x\in X$ and every $s \in S \cup S^{-1}$, $\a(s,x)$ and $\beta(s,x)$ both have word-length $\le \rho$.

Let $\nu = \tGamma_*\mu \in \Prob(\sym_e(G) \times \B^G)$. Recall from \S \ref{sec:Gamma} that it suffices to show  $f_{\nu}(\tTheta) = f_{\nu}( \tilde{\mho})$. Recall the maps $\tcE$, $\tcF$ from \S \ref{sec:finitemodels}. These maps are injective (mod null sets) because $\cE$ and $\cF$ are injective (mod null sets). They are also equivariant in the sense of (\ref{E:tcE}). So it suffices to show $f_{\tcE_*\nu}(\A_\rho^G \times \B^G) = f_{\tcF_*\nu}(\A_\rho^G \times \B^G)$.

 Let $Z = \cE(\sym_\rho(G)) \subset \A_\rho^G$ and let $\tZ = Z \times \B^G$. Recall from \S \ref{S:symbolic} that $\Omega(\s,\cO)$ is the set of all $x\in \A_\rho^n \times \B^n$ such that $P^\s_x \in \cO$ where $P^\s_x$ is the empirical measure of $x$ (with respect to $\s$). Also $\Omega_\tZ(\s,\cO)$ is the set of $x \in \Omega(\s,\cO)$ such that $P^\s_x(\tZ)=1$. 
 
 Given an open subset $\cO \subset \Prob(\A_\rho^G\times \B^G)$, we define the following subsets of $\Hom(G,\sym(n))\times \A_\rho^n \times \B^n$:
 \begin{eqnarray*}
 \Omega(n,\cO) & := &  \{ (\s,x, y):~ (x,y) \in \Omega(\s,\cO)\} \\
   \Omega_\tZ(n,\cO) & := &  \{ (\s,x,y):~ (x,y) \in \Omega_\tZ(\s,\cO)\} \\
 \Omega_\tZ(n) & := &  \Omega_\tZ(n,\Prob(\A_\rho^G \times \B^G)).
 \end{eqnarray*} 
  By Theorem \ref{T:SFT1} $Z$ is a subshift of finite type. Therefore, $\tZ$ is also a subshift of finite type. By Theorem \ref{thm:SFT} and the formula $n!^r = \#\Hom(G,\sym(n))$,
\begin{eqnarray}
 f_{\tcE_*\nu}(\A_\rho^G \times \B^G) &=& \inf_{\cO \ni \tcE_*\nu} \limsup_{n \to \infty} \frac{1}{n} \log \EE_{\sigma \sim u_n} \abs{\Omega_\tZ(\sigma, \cO)}\nonumber \\
 &=& \inf_{\cO \ni \tcE_*\nu} \limsup_{n \to \infty} \frac{1}{n} \log\left(\frac{\#\Omega_\tZ(n,\cO)}{n!^r} \right).\label{E:2}
 \end{eqnarray}
 Define
$$\Upsilon_n:  \Omega_\tZ(n) \to \Hom(G,\sym(n))\times \A_\rho^n\times \B^n$$
by $\Upsilon_n(\s, x,y) = (\tau, x,y)$ where $\tau$ is as defined in Lemma \ref{L:tau}. To prove the theorem, we will show that $\Upsilon_n$ maps periodic points witnessing the $f$-invariant for $\tcE_*\nu$ to periodic points witnessing the $f$-invariant for $\tcF_*\nu$. The first step is the following continuity property.

\noindent {\bf Claim 1}. Let $\cU\subset \Prob(\A_\rho^G\times \B^G)$ be an open neighborhood of $\tcF_*\nu$. Then there exists an open neighborhood $\cO$ of $\tcE_*\nu$ such that $\Upsilon_n(\Omega_\tZ(n,\cO)) \subset \Omega(n,\cU).$

\begin{proof}[Proof of Claim 1]
The map  $\cE^{-1}:Z \to \sym_e(G)$ is continuous by Lemma \ref{L:inverse}. The continuity of $\cF, \tcF$ and $\tcE^{-1}:\tZ \to \sym_e(G) \times \B^G$ follows directly from their definitions. Therefore $\tcF\circ \tcE^{-1}:\tZ \to \A_\rho^G\times \B^G$ is continuous and the induced map
$$(\tcF\circ \tcE^{-1})_*:\Prob(\tZ) \to \Prob(\A_\rho^G\times \B^G)$$
is continuous. Since $\cU \subset \Prob(\A_\rho^G\times \B^G)$ is an open neighborhood of $\tcF_*\nu$, this implies that its pre-image in $\Prob(\tZ)$ is an open neighborhood of $\tcE_*\nu$. In particular, there exists an open neighborhood $\cO' \subset \Prob(\tZ)$ of $\tcE_*\nu$ such that $(\tcF\circ \tcE^{-1})_*(\cO') \subset \cU$. 

Because $\tZ$ is closed in $\A_\rho^G\times \B^G$, the weak* topology on $\Prob(\tZ)$ is the restriction of the weak* topology on $\Prob(\A_\rho^G\times \B^G)$. So there is an open neighborhood $\cO \subset \Prob(\A_\rho^G\times \B^G)$ of $\tcE_*\nu$ such that $\cO \cap \Prob(\tZ) \subset \cO'$. 

Let $(\s,x,y) \in \Omega_\tZ(n,\cO)$. Since $\Upsilon_n(\s,x, y)=(\tau,x, y)$, it suffices to show $(\tau,x,y) \in \Omega(n,\cU)$. Because $(\s,x,y) \in \Omega_\tZ(n,\cO)$,  $P^\s_{(x,y)}(\tZ)=1$ and $P^\s_{(x,y)} \in \cO$. So $P^\s_{(x,y)}\in \cO'$. Thus $(\tcF\circ \tcE^{-1})_*P^\s_{(x,y)} \in \cU$. However,
\begin{eqnarray*}
 (\tcF\circ \tcE^{-1})_* P^\s_{(x,y)} &=& \frac{1}{n} \sum_{v \in [n]}  (\tcF\circ \tcE^{-1})_* \d_{(x^\s_v, y^\s_v) } \\
  &=& \frac{1}{n} \sum_{v \in [n]}  \d_{(x^\tau_v, y^\tau_v)} = P^\tau_{(x,y)}.
 \end{eqnarray*}
Above we have used Lemma \ref{L:tau2} to conclude $(\cF\circ \cE^{-1})(x^\s_v, y^\s_v)=(x^\tau_v, y^\tau_v)$. So $P^\tau_{(x,y)} \in \cU$ which implies $(\tau,x,y) \in \Omega(n,\cU)$.

%Since $\Upsilon_n(\s,x, y)=(\tau,x, y) \in \Omega(n,\cU)$, this proves the claim.
\end{proof}

\noindent {\bf Claim 2}.  $\Upsilon_n$ is injective.

\begin{proof}[Proof of Claim 2]
Suppose $\Upsilon_n(\s,x,y)=(\tau,x,y)$. Recall that $\phi_v = \cE^{-1}(x^\s_v) \in \sym_e(G)$. By Lemma \ref{L:tau},
$$\tau(g)v = \s(\phi^{-1}_v(g^{-1})^{-1})v.$$
Set $h = \phi^{-1}_v(g^{-1})^{-1}$. Then $g = \phi_v(h^{-1})^{-1}$. Thus
$$\s(h)v = \tau( \phi_v(h^{-1})^{-1} )v.$$
Now suppose that $h \in S \cup S^{-1}$. By Lemma \ref{L:inverse} with $g=e$,
$$\phi_v(h^{-1}) = (x^\s_v)_e(h^{-1}) = x(v)(h^{-1}).$$
Thus
$$\s(h)v = \tau(x(v)(h^{-1})^{-1})v$$
for all $h \in S\cup S^{-1}$ and $v\in [n]$. This shows that $\s$ is determined by $\tau$ and $x$. Thus $\Upsilon_n$ is injective.
\end{proof}

If $\cO, \cU$ are as in Claim 1, then Claim 2 implies
$$\# \Omega_\tZ(n,\cO) = \# \Upsilon_n(\Omega_\tZ(n,\cO)) \le \# \Omega(n,\cU).$$
So (\ref{E:2}) implies
\begin{eqnarray*}
 f_{\tcE_*\nu}(\A_\rho^G\times \B^G) &=&  \inf_{\cO \ni \tcE_*\nu} \limsup_{n \to \infty} \frac{1}{n} \log\left(\frac{\#\Omega_\tZ(n,\cO)}{n!^r} \right) \\
 &\le &  \inf_{\cU \ni \tcF_*\nu} \limsup_{n \to \infty} \frac{1}{n} \log\left(\frac{\#\Omega(n,\cU)}{n!^r} \right) \\
 &=& f_{\tcF_*\nu}(\A_\rho^G\times \B^G).
 \end{eqnarray*}
As mentioned above, since $\tcE$ and $\tcF$ are embeddings, this shows
$$f_\nu(\tTheta) = f_{\tcE_*\nu}(\A_\rho^G\times \B^G) \le  f_{\tcF_*\nu}(\A_\rho^G\times \B^G) = f_\nu(\tilde{\mho}).$$
By symmetry, the same argument with $T$ and $U$ switched shows the opposite inequality. This proves the theorem.

 \end{proof}

\appendix

\section{Open problems}\label{S:open}

\begin{enumerate}
\item Is there an analog of the Rudolph-Weiss Theorem concerning invariance of relative entropy under OE \cite{rudolph-weiss-2000} for sofic entropy or the $f$-invariant? The sofic-entropy formulation of the relative $f$-invariant in \cite{shriver-relativef} might be useful for this problem. If the Rudolph-Weiss Theorem generalizes to free groups then it should be possible to extend the $f$-invariant to actions of treeable groups via Hjorth's Lemma \cite{hjorth-cost-attained}.

\item Can the results of this paper be extended to free products of amenable groups or surface groups? The difficulty is that there is no analog of Theorem \ref{thm:SFT} in these cases. 

\item Suppose $G$ is a finitely generated infinite group. If two Bernoulli shifts over $G$ are bounded orbit equivalent then do they necessarily have the same base entropy? %We exclude locally finite groups because of Vershik's counterexamples \cite{MR1304093}. 

\item Is there some $1\le p <\infty$ such that the $f$-invariant is invariant under $\rL^p$-OE? %See \cite{} for general quantitative OE and measure equivalence. 

\end{enumerate}

%\cite{ElekUzabo2004}

\bibliography{biblio}

\def\cprime{$'$} \def\cprime{$'$} \def\cprime{$'$}
  \def\cfudot#1{\ifmmode\setbox7\hbox{$\accent"5E#1$}\else
  \setbox7\hbox{\accent"5E#1}\penalty 10000\relax\fi\raise 1\ht7
  \hbox{\raise.1ex\hbox to 1\wd7{\hss.\hss}}\penalty 10000 \hskip-1\wd7\penalty
  10000\box7} \def\cprime{$'$} \def\cprime{$'$} \def\cprime{$'$}
  \def\cprime{$'$} \def\cprime{$'$} \def\cprime{$'$} \def\cprime{$'$}
\begin{thebibliography}{Bow10d}

\bibitem[AK13]{MR3594268}
Nathalie Aubrun and Jarkko Kari.
\newblock Tiling problems on {B}aumslag-{S}olitar groups.
\newblock In {\em Proceedings: {M}achines, {C}omputations and {U}niversality
  2013}, volume 128 of {\em Electron. Proc. Theor. Comput. Sci. (EPTCS)}, pages
  35--46. EPTCS, [place of publication not identified], 2013.

\bibitem[Aus16]{MR3579704}
Tim Austin.
\newblock Behaviour of entropy under bounded and integrable orbit equivalence.
\newblock {\em Geom. Funct. Anal.}, 26(6):1483--1525, 2016.

\bibitem[Bel68]{MR0245756}
R.~M. Belinskaja.
\newblock Partitionings of a {L}ebesgue space into trajectories which may be
  defined by ergodic automorphisms.
\newblock {\em Funkcional. Anal. i Prilo\v{z}en.}, 2(3):4--16, 1968.

\bibitem[Ber66]{MR216954}
Robert Berger.
\newblock The undecidability of the domino problem.
\newblock {\em Mem. Amer. Math. Soc.}, 66:72, 1966.

\bibitem[Bow10a]{bowen-entropy-2010b}
Lewis Bowen.
\newblock The ergodic theory of free group actions: entropy and the
  {$f$}-invariant.
\newblock {\em Groups Geom. Dyn.}, 4(3):419--432, 2010.

\bibitem[Bow10b]{bowen-jams-2010}
Lewis Bowen.
\newblock Measure conjugacy invariants for actions of countable sofic groups.
\newblock {\em J. Amer. Math. Soc.}, 23(1):217--245, 2010.

\bibitem[Bow10c]{bowen-entropy-2010a}
Lewis Bowen.
\newblock Non-abelian free group actions: {M}arkov processes, the
  {A}bramov-{R}ohlin formula and {Y}uzvinskii's formula.
\newblock {\em Ergodic Theory Dynam. Systems}, 30(6):1629--1663, 2010.

\bibitem[Bow10d]{MR2630067}
Lewis~Phylip Bowen.
\newblock A measure-conjugacy invariant for free group actions.
\newblock {\em Ann. of Math. (2)}, 171(2):1387--1400, 2010.

\bibitem[Bow11]{MR2763777}
Lewis Bowen.
\newblock Orbit equivalence, coinduced actions and free products.
\newblock {\em Groups Geom. Dyn.}, 5(1):1--15, 2011.

\bibitem[Bow20]{MR4138907}
Lewis Bowen.
\newblock Examples in the entropy theory of countable group actions.
\newblock {\em Ergodic Theory Dynam. Systems}, 40(10):2593--2680, 2020.

\bibitem[BTD18]{bowen-td2}
Lewis Bowen and Robin Tucker-Drob.
\newblock Superrigidity, measure equivalence, and weak pinsker entropy.
\newblock {\em to appear in Groups, Geometry and Dynamics}, 2018.

\bibitem[CFW81]{MR662736}
A.~Connes, J.~Feldman, and B.~Weiss.
\newblock An amenable equivalence relation is generated by a single
  transformation.
\newblock {\em Ergodic Theory Dynamical Systems}, 1(4):431--450 (1982), 1981.

\bibitem[CGS17]{MR3692905}
David~B. Cohen and Chaim Goodman-Strauss.
\newblock Strongly aperiodic subshifts on surface groups.
\newblock {\em Groups Geom. Dyn.}, 11(3):1041--1059, 2017.

\bibitem[Coh17]{MR3600067}
David~Bruce Cohen.
\newblock The large scale geometry of strongly aperiodic subshifts of finite
  type.
\newblock {\em Adv. Math.}, 308:599--626, 2017.

\bibitem[Coh20]{MR4186473}
David~Bruce Cohen.
\newblock Lamplighters admit weakly aperiodic {SFT}s.
\newblock {\em Groups Geom. Dyn.}, 14(4):1241--1252, 2020.

\bibitem[Dan01]{danilenko-2001}
Alexandre~I. Danilenko.
\newblock Entropy theory from the orbital point of view.
\newblock {\em Monatsh. Math.}, 134(2):121--141, 2001.

\bibitem[DP02]{danilenko-2002}
Alexandre~I. Danilenko and Kyewon~K. Park.
\newblock Generators and {B}ernoullian factors for amenable actions and
  cocycles on their orbits.
\newblock {\em Ergodic Theory Dynam. Systems}, 22(6):1715--1745, 2002.

\bibitem[Dye59]{MR0131516}
H.~A. Dye.
\newblock On groups of measure preserving transformation. {I}.
\newblock {\em Amer. J. Math.}, 81:119--159, 1959.

\bibitem[Dye63]{MR0158048}
H.~A. Dye.
\newblock On groups of measure preserving transformations. {II}.
\newblock {\em Amer. J. Math.}, 85:551--576, 1963.

\bibitem[Hjo06]{hjorth-cost-attained}
Greg Hjorth.
\newblock A lemma for cost attained.
\newblock {\em Ann. Pure Appl. Logic}, 143(1-3):87--102, 2006.

\bibitem[KL11]{MR2813530}
David Kerr and Hanfeng Li.
\newblock Bernoulli actions and infinite entropy.
\newblock {\em Groups Geom. Dyn.}, 5(3):663--672, 2011.

\bibitem[KL20]{kerr-li-oe1}
David Kerr and Hanfeng Li.
\newblock Entropy, products, and bounded orbit equivalence.
\newblock {\em preprint}, 2020.

\bibitem[KL21]{MR4297192}
David Kerr and Hanfeng Li.
\newblock Entropy, {S}hannon orbit equivalence, and sparse connectivity.
\newblock {\em Math. Ann.}, 380(3-4):1497--1562, 2021.

\bibitem[Kol58]{kolmogorov-1958}
A.~N. Kolmogorov.
\newblock A new metric invariant of transient dynamical systems and
  automorphisms in {L}ebesgue spaces.
\newblock {\em Dokl. Akad. Nauk SSSR (N.S.)}, 119:861--864, 1958.

\bibitem[Kol59]{kolmogorov-1959}
A.~N. Kolmogorov.
\newblock Entropy per unit time as a metric invariant of automorphisms.
\newblock {\em Dokl. Akad. Nauk SSSR}, 124:754--755, 1959.

\bibitem[KR02]{kammeyer-rudolph}
Janet~Whalen Kammeyer and Daniel~J. Rudolph.
\newblock {\em Restricted orbit equivalence for actions of discrete amenable
  groups}, volume 146 of {\em Cambridge Tracts in Mathematics}.
\newblock Cambridge University Press, Cambridge, 2002.

\bibitem[Orn70]{ornstein-1970a}
Donald Ornstein.
\newblock Bernoulli shifts with the same entropy are isomorphic.
\newblock {\em Advances in Math.}, 4:337--352 (1970), 1970.

\bibitem[OW80]{OW80}
Donald~S. Ornstein and Benjamin Weiss.
\newblock Ergodic theory of amenable group actions. {I}. {T}he {R}ohlin lemma.
\newblock {\em Bull. Amer. Math. Soc. (N.S.)}, 2(1):161--164, 1980.

\bibitem[OW87]{OW87}
Donald~S. Ornstein and Benjamin Weiss.
\newblock Entropy and isomorphism theorems for actions of amenable groups.
\newblock {\em J. Analyse Math.}, 48:1--141, 1987.

\bibitem[Pop07]{Pop07}
S.~Popa.
\newblock Cocycle and orbit equivalence superrigidity for malleable actions of
  w-rigid groups.
\newblock {\em Inventiones mathematicae}, 170(2):243--295, 2007.

\bibitem[Pop08]{Pop08}
S.~Popa.
\newblock On the superrigidity of malleable actions with spectral gap.
\newblock {\em Journal of the American Mathematical Society}, 21(4):981--1000,
  2008.

\bibitem[RW00]{rudolph-weiss-2000}
Daniel~J. Rudolph and Benjamin Weiss.
\newblock Entropy and mixing for amenable group actions.
\newblock {\em Ann. of Math. (2)}, 151(3):1119--1150, 2000.

\bibitem[Sew18]{seward-ornstein}
Brandon Seward.
\newblock Bernoulli shifts with bases of equal entropy are isomorphic.
\newblock {\em arXiv:1805.08279}, 2018.

\bibitem[Sew19]{seward-kreiger-1}
Brandon Seward.
\newblock Krieger's finite generator theorem for actions of countable groups
  {I}.
\newblock {\em Invent. Math.}, 215(1):265--310, 2019.

\bibitem[Shr20]{shriver-relativef}
Chris Shriver.
\newblock The relative f-invariant and non-uniform random sofic approximations.
\newblock {\em arXiv preprint arXiv:2003.00663}, 2020.

\bibitem[Ver94]{MR1304093}
A.~M. Vershik.
\newblock Theory of decreasing sequences of measurable partitions.
\newblock {\em Algebra i Analiz}, 6(4):1--68, 1994.

\end{thebibliography}
\bibliographystyle{alpha}

\end{document}